\theoremstyle{plain}
\newtheorem{thm}{Theorem}[section]
\newtheorem{theorem}[thm]{Theorem}
\newtheorem{lemma}[thm]{Lemma}
\newtheorem{corollary}[thm]{Corollary}
\newtheorem*{Assumption}{Assumption A}
\newtheorem*{Problem}{Problem}
\newtheorem{question}[thm]{Question}
\theoremstyle{definition}
\newtheorem{remark}[thm]{Remark}
\newtheorem{definition}[thm]{Definition}
\numberwithin{equation}{section}
\newcommand{\romea}{\uppercase\expandafter{\romannumeral1}}
\newcommand{\romeb}{\uppercase\expandafter{\romannumeral2}}
\newcommand{\Aut}{{\rm Aut}}
\newcommand{\Diag}{{\rm diag}}
\newcommand{\PGL}{{\rm PGL}}
\newcommand{\C}{{\mathbb C}}
\renewcommand{\P}{{\mathbb P}}
\newcommand{\Q}{{\mathbb Q}}
\newcommand{\Z}{{\mathbb Z}}
\begin{document}

	\title{Nonrational varieties with unirational parametrizations of coprime degrees}

\author{Song Yang, Xun Yu \and Zigang Zhu}

\address{Center for Applied Mathematics and KL-AAGDM, Tianjin University, Weijin Road 92, Tianjin 300072, P.R. China}

\email{syangmath@tju.edu.cn, xunyu@tju.edu.cn, zhzg0313@tju.edu.cn}

\subjclass[2020]{Primary 14M20, 14L30; Secondary 14E08, 14J70}

\maketitle

\begin{abstract}
	We show that there exists a $2$-dimensional family of smooth cubic threefolds admitting unirational parametrizations of coprime degrees. This together with Clemens--Griffiths' work solves the long standing open problem whether there exists a nonrational variety with unirational parametrizations of coprime degrees. Our proof uses a new approach, called the Noether--Cremona method, for determining the rationality of quotients of hypersurfaces.
\end{abstract}
\setcounter{tocdepth}{1}
%\tableofcontents
\section{Introduction}
 Varieties are called rational if they are birational to projective space. It is a fundamental problem in algebraic geometry to determine which varieties are rational. Rational varieties must be unirational, that is, dominant by a rational map from projective space. The classical L\"{u}roth problem asks if every unirational variety is rational. The first counter-examples appeared in 1970s. Clemens--Griffiths \cite{CG72} proved that every smooth cubic threefold is irrational by intermediate Jacobian. Iskovskikh--Manin \cite{IM71} showed that every smooth quartic threefold is irrational by birational rigidity. Artin--Mumford \cite{AM72} proved that some quartic double solids are unirational but not rational by torsion in $H^3$. Since then, more examples of nonrational unirational varieties are found (see e.g. \cite{BCTSSD85}, \cite{Puk87}, \cite{Kol95}, \cite{Puk98}, \cite{deF13}, \cite{Voi14},  \cite{COV15}, \cite{CTP16}, \cite{Tot16}, \cite{HPT18},
 \cite{Sch19a}, \cite{Sch19b}, \cite{Sch21}, \cite{NO22}). However, the following long standing problem is still open (see e.g. \cite[\S 1.2.1]{HMP98}, \cite[Remark 3.9]{CT17}, \cite[Question 7.2.2]{AB17} and \cite[Chapter 6, Remark 5.17]{Huy23}).
	
	\begin{Problem}
		Does there exist a nonrational variety with unirational parametrizations of coprime degrees?
		In particular, is there a smooth cubic threefold admitting unirational parametrizations of coprime degrees?
	\end{Problem}
Here, an $n$-dimensional variety $X$ is said to admit a \textit{unirational parameterization of degree $d$} if there exists a degree $d$ dominant rational map from the projective space $\mathbb{P}^n$ to $X$.
	It is known that every smooth cubic hypersurface in dimension at least three admits a unirational parametrization of degree $2$ (see \cite[Appendix B]{CG72}). The moduli space of smooth cubic fourfolds contains some divisors whose members admit unirational parametrizations of odd degree (see e.g. \cite{HT01}, \cite{Has16}). Existence of nonrational ones among such members is unclear since the rationality of cubic fourfolds is still not well-understood in general. 
	
	The main result of this paper is as follows.
	
	\begin{theorem}\label{thm:main}
		There exists a $2$-dimensional family of smooth cubic threefolds admitting unirational parametrizations of coprime degrees.
	\end{theorem}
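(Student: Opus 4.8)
The plan is to reduce the theorem to producing, inside a $2$-dimensional family, smooth cubic threefolds $X$ that carry a unirational parametrization of some odd degree $m$. Indeed, every smooth cubic threefold admits the classical unirational parametrization of degree $2$ (\cite[Appendix B]{CG72}), and $\gcd(2,m)=1$ whenever $m$ is odd, so such an $X$ automatically has unirational parametrizations of coprime degrees.

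To build an odd-degree parametrization of a given $X$, I would realize a birational model of $X$ as a quotient $W/G$, where $W$ is a \emph{rational} threefold and $G$ is a finite group of odd order; I expect $G\cong\Z/3$, so $m=3$. Then the quotient map $W\dashrightarrow W/G\cong X$ is dominant and generically finite of degree $|G|$, and precomposing with a birational parametrization $\P^{3}\dashrightarrow W$ yields a unirational parametrization of $X$ of the odd degree $|G|$. It is worth emphasizing that it would not do to quotient $X$ itself by a group: a quotient map issuing \emph{from} $X$ points the wrong way for parametrizing $X$, so the construction must route through an auxiliary rational variety that maps \emph{onto} $X$.

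Concretely, I would take a hypersurface $V$ carrying an action of a finite group $\Gamma$ together with a normal subgroup $N$ of $\Gamma$, arranged so that $W:=V/N$ is rational while $V/\Gamma=W/(\Gamma/N)$ is $(\Gamma/N)$-equivariantly birational to a smooth cubic threefold $X$; here $G=\Gamma/N$ has odd order, and the non-rationality of $X$ is supplied by Clemens--Griffiths. The rationality of $W=V/N$ is exactly where the Noether--Cremona method enters: to prove a quotient of a hypersurface is rational, one produces an equivariant birational map to a simpler model through a controlled chain of Cremona transformations and then descends to the quotient. Running this while tracking the residual $G$-action keeps the composite $\P^{3}\dashrightarrow W\dashrightarrow X$ visibly generically finite of degree exactly $|G|$. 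It then remains to identify the quotient $V/\Gamma$ explicitly as a cubic threefold, to check that the members so obtained are smooth (an open condition on the finite-dimensional linear space of the relevant invariant cubic forms, so it suffices to exhibit one smooth member), and to verify that they are not all projectively equivalent, the dimension count of invariant forms modulo the relevant centralizer yielding $2$.

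I expect the rationality input to be the main obstacle, because the two halves of the construction pull against each other: $W$ must be rational, yet $W/G\cong X$ must fail to be rational. Thus the Noether--Cremona reduction of $W$ has to be carried out explicitly and, crucially, uniformly across the entire $2$-dimensional family, while keeping $X$ smooth and preventing the quotient map $W\dashrightarrow X$ from dropping in degree. Maintaining coherence of the $\Gamma$-action through the chain of Cremona transformations, and confirming that the reduction does not degenerate along the locus where $X$ acquires singularities, are the delicate points.
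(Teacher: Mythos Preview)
Your proposal is correct and matches the paper's approach essentially point for point: the paper takes $V$ to be a smooth cubic threefold in a $2$-parameter family with $\Gamma\cong(\Z/3)^2$ and $N\cong\Z/3$, proves $W=V/N$ rational via the Noether--Cremona method (Theorem~\ref{thm:C3cubic3}), and identifies $V/\Gamma$ birationally with a smooth cubic threefold $X'$ again via Noether--Cremona (Theorem~\ref{thm:C3C3}), so that $\P^3\dashrightarrow W\to X'$ has degree~$3$. The only point you leave implicit is that the Noether--Cremona method is used twice---once for the rationality of $V/N$ and once to recognize $V/\Gamma$ as a smooth cubic---rather than just for the first step.
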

Actually these degrees are the smallest coprime degrees 2 and 3 (Theorem \ref{thm:maindetailed}).
	This together with Clemens--Griffiths' work \cite{CG72} solves the above-mentioned open problem. 
	Nothing is known about the rationality of smooth cubic hypersurfaces of odd dimension $\geq 5$. For such cubic hypersurfaces, based on Theorem \ref{thm:main} and \cite[Proposition 3.1]{CT17}, we obtain the following
	
	\begin{theorem}\label{thm:main-cor}
Let $n\geq5$ be an odd integer. Then every smooth cubic hypersurface $X \subset \mathbb{P}^{n+1}$ of dimension $n$ whose equation is given by a form $\sum_i F_i$, where each $F_i$ is in separated variables and involves at most three variables, admits unirational parametrizations of coprime degrees.
	\end{theorem}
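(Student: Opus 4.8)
The plan is to exhibit, for every $X$ in the statement, two unirational parametrizations of coprime degrees, namely $2$ and $3$. That $X$ has one of degree $2$ is classical: by \cite[Appendix B]{CG72} every smooth cubic hypersurface of dimension $\ge 3$ does, and $\gcd(2,3)=1$, so the whole problem reduces to producing a unirational parametrization of \emph{degree $3$}. I would first record the elementary remark that, since $X=\{F_1+\dots+F_r=0\}$ is smooth, each coordinate cubic $\{F_i=0\}$ is smooth in its own $\P^{\,|B_i|-1}$ (a smooth plane cubic, three distinct points of a line, or the empty set), so that every block--size profile with $|B_i|\le 3$ and $\sum_i|B_i|=n+2$ genuinely occurs and has to be dealt with.

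For the degree--$3$ parametrization, the idea is to reduce the dimension and invoke Theorem~\ref{thm:main}. Regroup the variables as $F=G+H$ with $G=\sum_{i\in I}F_i$ and $H=\sum_{i\notin I}F_i$ in disjoint variables, and choose the packet $I$ --- after, if needed, a linear change of coordinates inside a block --- so that $Y:=\{G=0\}$ is a smooth cubic threefold to which Theorem~\ref{thm:maindetailed} applies; this forces $\sum_{i\in I}|B_i|=5$. Theorem~\ref{thm:maindetailed} then supplies a unirational parametrization of $Y$ of degree $3$. Now I would apply \cite[Proposition 3.1]{CT17} to the separated decomposition $F=G+H$: it transfers a unirational parametrization of the ``core'' $Y$ to a unirational parametrization of $X$ of the same degree, the hypotheses there being met thanks to the rational points, resp.\ cube--root structure, carried by the separated summands that make up $H$. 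Combined with the degree--$2$ parametrization, this gives the two coprime degrees.

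The substantial issue --- and, I expect, the main obstacle --- is to carry out this reduction uniformly over all admissible profiles. One has to (i) verify that for each profile the packet $I$ with $\sum_{i\in I}|B_i|=5$ can be chosen so that $Y$ is smooth and of the shape handled by Theorem~\ref{thm:maindetailed}, and that the transfer of \cite[Proposition 3.1]{CT17} does apply and returns a degree coprime to $2$ (in particular is not multiplied through by an even factor); and (ii) treat the profiles admitting no five--variable sub--packet --- most conspicuously that of $r$ blocks all of size $3$, where the sub-sums have dimension $\equiv 1 \pmod 3$ and so a cubic threefold is never among them. For those I would peel off, again via \cite[Proposition 3.1]{CT17}, an intermediate separated cubic of smaller dimension --- a rational cubic surface when a four--variable sub--packet is available, and otherwise a separated cubic fourfold --- and argue by induction on $r$, the base cases being furnished by Theorem~\ref{thm:main}, by rationality of smooth cubic surfaces, and, for the all--size--$3$ tower, by a direct treatment of separated cubic fourfolds, which I anticipate to be the genuinely delicate point.
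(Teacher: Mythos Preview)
Your overall strategy is the paper's: obtain degree $2$ from \cite[Appendix~B]{CG72}, and degree $3$ by combining Theorem~\ref{thm:maindetailed} (a cubic threefold with a degree-$3$ parametrization) with the transfer mechanism \cite[Proposition~3.1]{CT17}. So the ingredients and the architecture are right.

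Where you diverge from the paper is in how you set up the transfer. You try to realise the auxiliary cubic threefold as a five-variable sub-sum $\sum_{i\in I}F_i$ of the given $X$, and this is what drives your case analysis: you then have to check that the sub-sum lands in the family of Theorem~\ref{thm:maindetailed}, and you are forced to confront profiles (such as $(3,3,1)$ or $(3,\ldots,3)$) with no five-variable sub-packet, leading you to an induction through cubic surfaces and, in the worst case, a ``direct treatment of separated cubic fourfolds'' that you flag as delicate. None of this appears in the paper.

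The paper's proof is two sentences. Its sole preparatory observation is that with $t_6=0$ the cubic threefold of Theorem~\ref{thm:maindetailed} can itself be rewritten, after a linear change of coordinates, as
\[
x_1^3 + x_3^3 + F_1'(x_2,x_4,x_5),
\]
i.e.\ it is already a sum of forms each in at most three variables --- the same shape as $X$. With this single almost-diagonal cubic threefold in hand, the paper invokes \cite[Proposition~3.1]{CT17} and Theorem~\ref{thm:maindetailed} directly, with no extraction of a sub-packet from $X$, no case split on block profiles, and no appeal to cubic fourfolds. In other words, \cite[Proposition~3.1]{CT17} does more work than the bare ``propagate from a separated summand'' reading you are using; once you have \emph{one} almost-diagonal cubic of lower odd dimension carrying a degree-$3$ parametrization, the proposition hands you a degree-$3$ parametrization of every almost-diagonal $X$ in the range of the theorem. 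So your anticipated obstacle dissolves, and the induction/fourfold detour is unnecessary --- you should revisit the precise statement of \cite[Proposition~3.1]{CT17} rather than build machinery around a weaker version of it.
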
	
	Next, we briefly explain the idea of the proof of Theorem \ref{thm:main}. We develop a new approach, called the {\it Noether--Cremona method}, for determining the rationality of hypersurfaces and their quotients by finite groups.  
	This method rises from our study \cite{YYZ25} on the rationality of quotients of cubic threefolds by finite groups classified in \cite{WY20}, 
	which is inspired by the classical Noether's problem (see \cite{Pro10} for a survey). Roughly speaking, suppose a smooth hypersurface $X$ in the complex projective space $\P^{n+1}$ is preserved by a finite subgroup $G$ in the automorphism group ${\rm Aut}(\P^{n+1})$ of $\P^{n+1}$. By choosing a birational map (if it exists), called the {\it Noether--Cremona transformation}, $\P^{n+1}/G \dashrightarrow \P^{n+1}$, we transfer the quotient variety $X/G$ birationally to a hypersurface $X_{NC}$ (Theorem \ref{thm:bir}). The degree of $X_{NC}$ might be greater than that of $X$. A crucial point is that by repeating this process with suitable Noether--Cremona transformations, we often can find some hypersurface birational to $X/G$ of degree close to that of $X$, which enables us to determine the rationality/irrationality of $X/G$. More precisely, we find a $2$-dimensional family of smooth cubic threefolds $X$ preserved by $G$ isomorphic to the elementary abelian group of order $9$. By applying Noether--Cremona method, we show that $X/G$ is birational to $X'$ lying in another $2$-dimensional family of smooth cubic threefolds (Theorem \ref{thm:C3C3}). On the other hand, applying Noether--Cremona method again, we show that there exists a (normal) subgroup $G_1$ in $G$ of order $3$ such that $X/G_1$ is rational (Theorem \ref{thm:C3cubic3}). In this way, we conclude that the smooth cubics $X'$ admit a unirational parametrization of degree $3$ (Theorem \ref{thm:maindetailed}). The Noether--Cremona method proves particularly effective for quotients of cubic hypersurfaces in \cite{YYZ25}. 
	
It seems that a common feature of most (if not all) previously known approaches to the rationality problem is that each is effective for establishing either rationality or irrationality, but not both. However, it is worth noting that the Noether--Cremona method can be used to demonstrate both rationality and irrationality. Although we focus on complex varieties in this paper, thanks to the nature of Noether--Cremona method (see Remark \ref{rem:coeff}), the Noether--Cremona method works well for an arbitrary ground field (see Remarks \ref{rmk:basefield}, \ref{rmk:posi}).
	
	We conclude the introduction by posing some open questions. If a variety admits unirational parametrizations of coprime degrees, 
	then it has a Chow-theoretic decomposition of the diagonal; equivalently, it is universally ${\rm CH}_{0}$-trivial (see \cite{ACTP17}). There is a nonempty countable union of proper subvarieties of codimension $\leq 3$ in the moduli space of smooth cubic threefolds parametrizing universally ${\rm CH}_0$-trivial cubic threefolds (\cite{Voi17}). These subvarieties contain the smooth cubic threefolds in Theorem \ref{thm:maindetailed}.

	\begin{question}
Is there a smooth cubic threefold, isomorphic to none of the smooth cubic threefolds in Theorem \ref{thm:maindetailed}, that admits unirational parametrizations of coprime degrees?
	\end{question}
Thanks to the breakthrough work of Voisin \cite{Voi14}, all recently known methods for proving stable irrationality rely on the decomposition of the diagonal. Very recently, Engel--de Gaay-Fortman--Schreieder \cite[Corollary 1.4]{EdeGFS25} proved that very general cubic threefolds do not admit a decomposition of the
diagonal, and hence not stably rational. The relation between stable rationality and the existence of unirational parametrizations of coprime degrees is unclear (see \cite[Page 1620]{Voi17}).
	\begin{question}
		Is there a smooth complex projective variety admitting unirational parametrizations of coprime degrees which is not stably rational?
	\end{question}
It is known that (stably)
rationality is preserved under specializations in smooth families (\cite{NS19}, \cite{KT19}). On the other hand, analogous result for unirationality is unknown.
	\begin{question}
	Is the existence of unirational parametrizations of coprime degrees preserved under specializations in smooth families?
	\end{question}

	\subsection*{Acknowledgements}
	We would like to thank Professor Keiji Oguiso for valuable conversations and comments.
	This work is partially supported by the National Natural Science Foundation of
	China (No. 12171351, No. 12071337).

\section{Noether--Cremona method}
 In this section, we will introduce the Noether--Cremona method and apply it to study the rationality of quotients of hypersurfaces. 
	As illustrations, we investigate the rationality of quotients of smooth cubic hypersurfaces in dimension $\geq 3$ by two different actions of the order $3$ cyclic group (Theorems \ref{thm:Ex1} and \ref{thm:Ex3}).
\subsection{General set-up}
Let $G$ be a finite subgroup in the automorphism group $\Aut(\P^{n+1})$ of the complex projective space $\P^{n+1}$ and let $X\subset \P^{n+1}$ be an irreducible hypersurface of degree $d\geq2$, which is preserved by $G$ (i.e. $g(X)=X$ for any $g\in G$). Then  the quotient variety $X/G$ is a closed subvariety of $\P^{n+1}/G$. Let $H\subset\P^{n+1}$ be a hyperplane preserved by $G$. 
\begin{definition}
	Let $G\subset \Aut(\P^{n+1})$ be a finite subgroup. If $\Phi: \P^{n+1}/G\dashrightarrow \P^{n+1}$ is a birational map, then we call $\Phi$ a {\it Noether--Cremona transformation}.
\end{definition}
Note that the rationality of $\mathbb{P}^{n+1}/G$  is known as the {\it Noether's problem} (\cite{Noe13}), a classical question with a long history (see e.g. \cite{Pro10}). In general, it is still an open problem.
We make the following \begin{Assumption}\label{assum}
	$\P^{n+1}/G$ is rational. 
\end{Assumption}
Under this assumption, by choosing various Noether--Cremona transformations $\Phi: \P^{n+1}/G\dashrightarrow \P^{n+1}$, we transfer $X/G$ to hypersurfaces $Y\subset\P^{n+1}$ often birational to $X/G$. By studying rationality of such $Y$, we determine the rationality of $X/G$.
The idea of this method is illustrated in the following diagram
\begin{center}
	\begin{tikzcd}
		\mathbb{P}^{n+1} \arrow[r] \arrow["G"', loop, distance=1.3em, in=190, out=170]  & \mathbb{P}^{n+1}/G \arrow[r, "\Phi", dashed] & \mathbb{P}^{n+1}  \\
		X \arrow[u, hook] \arrow[r] \arrow["G"', loop, distance=1.3em, in=195, out=165] & X/G \arrow[u, hook] \arrow[r, dashed]        & \, Y \, . \arrow[u, hook]
	\end{tikzcd}
\end{center}

\subsection{Special case: $G$ being abelian}\label{ss:Gabe}
We fix some notation and conventions which will be used throughout this subsection. 
	\begin{enumerate}
		\item[]
		\begin{tabular}{rl} 
			$G$ & an abelian subgroup in ${\rm PGL}(n+2,\C)$ generated by $[A_1],\dots, [A_k]$,\\ & where $A_i={\rm diag}(\lambda_{i1},\dots,\lambda_{i(n+1)},1)$ ($1\le i\le k$) and all $\lambda_{ij}$ are roots of unity;\\
			
			$\tilde{G}$ & the abelian subgroup in ${\rm GL}(n+2,\C)$ generated by $A_1,\dots, A_k$;\\
			
			$F$ & an irreducible homogeneous polynomial in variables $x_1,\dots,x_{n+2}$ of degree $d\ge 2$\\
			&satisfying $F(\lambda_{i1}x_1,\dots,\lambda_{i(n+1)}x_{n+1},x_{n+2})=F$ for all $i$;\\
			
			$X$ & the hypersurface  in $\P^{n+1}$ defined by $F$; \\
			
			$H$ &  the hyperplane $\{ x_{n+2}=0\}$ in $\P^{n+1}$.\\
		\end{tabular} 
	\end{enumerate} 
Recall that the group $G$ acts on $\P^{n+1}$ via $$([A_i], (x_1:\cdots:x_{n+1}:x_{n+2})) \mapsto (\lambda_{i1}x_1:\cdots:\lambda_{i(n+1)}x_{n+1}:x_{n+2}).$$ Thus we may view $G$ as a finite subgroup in ${\rm Aut}(\P^{n+1})$ which preserves $X$. 

In this case, {\bf Assumption A} is satisfied by Fischer \cite{Fis15}.
Before we introduce the Noether--Cremona method, we need some preparation. Note that $\tilde{G}$ acts on $\C[x_1,\dots,x_{n+2}]$ via $$A_i(f)= f(\lambda_{i1}x_1,\dots,\lambda_{i(n+1)}x_{n+1},x_{n+2}),\;\;\; f\in\C[x_1,\dots,x_{n+2}],$$
which induces an action of $\tilde{G}$ on the subring  $\C[x_1,\dots,x_{n+1}]$. 
We call $\mathfrak{m}\in\C(x_1,\dots,x_{n+1})$ a {\it rational monomial}
if it can be written as $\mathfrak{m}=x_1^{\alpha_1}\cdots x_{n+1}^{\alpha_{n+1}}$, where $\alpha_i\in \Z$.
By \cite[Theorem 1]{Cha69}, there exists a set of rational monomials 
\begin{equation}\label{eq:ui}
	u_i=x_1^{a_{i1}}\cdots x_{n+1}^{a_{i(n+1)}} \;\;(1\le i\le n+1)
\end{equation}
such that the invariant field
\begin{equation}\label{eq:invfield}
	\C(x_1,\dots,x_{n+1})^{\tilde{G}}=\C(u_1,\dots,u_{n+1}).
\end{equation} 
\begin{lemma}\label{lem:mono}
	Let $\mathfrak{m}\in\C(x_1,\dots,x_{n+1})^{\tilde{G}}$ be a rational monomial. Then $\mathfrak{m}$ can be written as a rational monomial in variables $u_1,\dots,u_{n+1}$.
\end{lemma}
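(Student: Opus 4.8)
The plan is to pass from rational functions to exponent vectors and then invoke unique factorization. Write $\mathfrak{m}=x_1^{\gamma_1}\cdots x_{n+1}^{\gamma_{n+1}}$ with $\gamma=(\gamma_1,\dots,\gamma_{n+1})\in\Z^{n+1}$, and let $\alpha^{(i)}=(a_{i1},\dots,a_{i(n+1)})\in\Z^{n+1}$ be the exponent vector of the rational monomial $u_i$ from \eqref{eq:ui}. Let $N\subseteq\Z^{n+1}$ be the subgroup generated by $\alpha^{(1)},\dots,\alpha^{(n+1)}$. Under the identification of Laurent monomials in $x_1,\dots,x_{n+1}$ with their exponent vectors in $\Z^{n+1}$, the Laurent monomials lying in $\C[u_1^{\pm1},\dots,u_{n+1}^{\pm1}]$ are exactly $\{x^\beta:\beta\in N\}$, and the assertion to be proved is precisely that $\gamma\in N$.

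First I would record that $u_1,\dots,u_{n+1}$ are algebraically independent over $\C$: by \eqref{eq:invfield} the field $\C(x_1,\dots,x_{n+1})$ is finite over $\C(u_1,\dots,u_{n+1})$ (the $\tilde G$-action factors through a finite subgroup of the diagonal torus), so $\C(u_1,\dots,u_{n+1})$ has transcendence degree $n+1$ over $\C$, whence its $n+1$ generators $u_i$ are algebraically independent. Consequently $N$ has rank $n+1$, so it has finite index $e:=[\Z^{n+1}:N]$, and $\C[u_1^{\pm1},\dots,u_{n+1}^{\pm1}]$ is (abstractly) a Laurent polynomial ring over $\C$, in particular a UFD. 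Next, since $e\cdot\Z^{n+1}\subseteq N$ we get $e\gamma\in N$, i.e. $\mathfrak{m}^e=u_1^{b_1}\cdots u_{n+1}^{b_{n+1}}$ for suitable $b_i\in\Z$, while at the same time $\mathfrak{m}\in\C(x_1,\dots,x_{n+1})^{\tilde G}=\C(u_1,\dots,u_{n+1})$ by \eqref{eq:invfield}. Thus $\mathfrak{m}$ is an element of the fraction field of $\C[u_1^{\pm1},\dots,u_{n+1}^{\pm1}]$ whose $e$-th power is a unit of that ring (equivalently, $\mathfrak{m}$ is integral over the integrally closed ring $\C[u_1^{\pm1},\dots,u_{n+1}^{\pm1}]$ and lies in its fraction field). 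Writing $\mathfrak{m}=P/Q$ in lowest terms with $P,Q\in\C[u_1,\dots,u_{n+1}]$ and clearing denominators in $\mathfrak{m}^e=u^{b}$, unique factorization forces $P$ and $Q$ each to be a nonzero constant times a monomial in the $u_i$, so $\mathfrak{m}=c\,u_1^{c_1}\cdots u_{n+1}^{c_{n+1}}$ with $c\in\C$, $c\neq0$, and $c_i\in\Z$. Finally I compare this identity back inside $\C(x_1,\dots,x_{n+1})$: there $\mathfrak{m}$ and $u_1^{c_1}\cdots u_{n+1}^{c_{n+1}}$ are genuine Laurent monomials with coefficient $1$, so $c=1$ and $\mathfrak{m}=u_1^{c_1}\cdots u_{n+1}^{c_{n+1}}$, as required.

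The step I expect to be the crux is the passage from ``$\mathfrak{m}$ lies in the subfield $\C(u_1,\dots,u_{n+1})$'' to ``$\mathfrak{m}$ is a rational monomial in the $u_i$'': this is false for a general subfield, and what makes it work here is exactly that the subfield is generated by Laurent monomials whose exponent vectors span a full-rank sublattice of $\Z^{n+1}$, which is what powers the unique-factorization (equivalently, normality of a Laurent polynomial ring) argument. Everything else — the transcendence-degree count and the manipulation of exponent vectors — is routine bookkeeping.
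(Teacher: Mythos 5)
Your proof is correct and follows essentially the same route as the paper's: algebraic independence of the $u_i$ gives a full-rank exponent lattice (the paper phrases this as the matrix $(a_{ij})$ having nonzero determinant), hence some power $\mathfrak{m}^e$ is a rational monomial in the $u_i$, and then membership in $\C(u_1,\dots,u_{n+1})$ together with unique factorization forces $\mathfrak{m}$ itself to be one. Your write-up merely spells out two steps the paper leaves implicit (the transcendence-degree argument for independence and the final normalization of the constant to $1$), so there is nothing to correct.
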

\begin{proof}
	The $(n+1)\times (n+1)$ matrix $P=(a_{ij})$  has nonzero determinant since $u_1,\dots,u_{n+1}$ are algebraically independent by equations \eqref{eq:ui} and \eqref{eq:invfield}. From this, we infer that there exist positive integers $d_i$ such that  $x_i^{d_i}$ can be written as rational monomials in  variables $u_i$. Then there exists a positive integer $N$ such that $\mathfrak{m}^N$ can be written as  a rational monomial in variables $u_i$, which concludes the lemma by the equality \eqref{eq:invfield} and $\C[u_1,\dots,u_{n+1}]$ being a UFD.
\end{proof}
\begin{lemma}\label{lem:local}
 The localization $\C[u_1,\dots,u_{n+1}]_{u_1\cdots u_{n+1}}$ is equal to $\left(\C[x_1,\dots,x_{n+1}]_{x_1\cdots x_{n+1}}\right)^{\tilde{G}}$.
\end{lemma}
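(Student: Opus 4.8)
The plan is to reduce the statement to bookkeeping with the monomial grading on the Laurent polynomial ring: I will identify both sides with the $\C$-span of a sublattice of Laurent monomials in $x_1,\dots,x_{n+1}$ and then check that the two sublattices agree. Throughout, write $x^\alpha=x_1^{\alpha_1}\cdots x_{n+1}^{\alpha_{n+1}}$ for $\alpha=(\alpha_1,\dots,\alpha_{n+1})\in\Z^{n+1}$, and view all rings in sight as subrings of $\C(x_1,\dots,x_{n+1})$. First I would note that inverting $x_1\cdots x_{n+1}$ (resp.\ $u_1\cdots u_{n+1}$) makes every $x_i$ (resp.\ every $u_i$) a unit, so $\C[x_1,\dots,x_{n+1}]_{x_1\cdots x_{n+1}}=\bigoplus_{\alpha\in\Z^{n+1}}\C x^\alpha$, and $\C[u_1,\dots,u_{n+1}]_{u_1\cdots u_{n+1}}$ has $\C$-basis the Laurent monomials $u^\beta:=u_1^{\beta_1}\cdots u_{n+1}^{\beta_{n+1}}$ with $\beta\in\Z^{n+1}$. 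Writing $P=(a_{ij})$ for the exponent matrix in \eqref{eq:ui} and $P_i$ for its $i$-th row, we have $u_i=x^{P_i}$ and hence $u^\beta=x^{\sum_i\beta_iP_i}$; since $\Det P\ne0$ (as recorded in the proof of Lemma \ref{lem:mono}), the map $\beta\mapsto\sum_i\beta_iP_i$ from $\Z^{n+1}$ to itself is injective, so the monomials $u^\beta$ are pairwise distinct and $\C[u_1,\dots,u_{n+1}]_{u_1\cdots u_{n+1}}=\bigoplus_{\alpha\in L'}\C x^\alpha$ with $L'=\Z P_1+\cdots+\Z P_{n+1}\subseteq\Z^{n+1}$.

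Next I would pin down the invariant subring. The $\Z^{n+1}$-grading of $\C[x_1,\dots,x_{n+1}]_{x_1\cdots x_{n+1}}$ by monomial exponent is preserved by $\tilde G$, and the generator $A_i$ acts on the line $\C x^\alpha$ by the scalar $\prod_j\lambda_{ij}^{\alpha_j}$, so $\tilde G$ acts on $\C x^\alpha$ through a character $\chi_\alpha$ of $\tilde G$, with $\chi_\alpha=1$ precisely when $x^\alpha$ is $\tilde G$-invariant. Consequently $f=\sum_\alpha c_\alpha x^\alpha$ is $\tilde G$-invariant if and only if $c_\alpha=0$ whenever $\chi_\alpha\ne1$; that is, $\bigl(\C[x_1,\dots,x_{n+1}]_{x_1\cdots x_{n+1}}\bigr)^{\tilde G}=\bigoplus_{\alpha\in L}\C x^\alpha$, where $L=\{\alpha\in\Z^{n+1}:\chi_\alpha=1\}$ is the sublattice of exponents of $\tilde G$-invariant Laurent monomials. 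Since each $u_i$ is $\tilde G$-invariant we have $P_i\in L$, hence $L'\subseteq L$; this already gives the inclusion $\C[u_1,\dots,u_{n+1}]_{u_1\cdots u_{n+1}}\subseteq\bigl(\C[x_1,\dots,x_{n+1}]_{x_1\cdots x_{n+1}}\bigr)^{\tilde G}$.

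It remains to prove $L\subseteq L'$, and this is the only step that uses the defining property \eqref{eq:invfield} of the $u_i$. Given $\alpha\in L$, the Laurent monomial $x^\alpha$ is a $\tilde G$-invariant element of $\C(x_1,\dots,x_{n+1})$, so it lies in $\C(x_1,\dots,x_{n+1})^{\tilde G}=\C(u_1,\dots,u_{n+1})$; being moreover a rational monomial, Lemma \ref{lem:mono} shows $x^\alpha=u_1^{\beta_1}\cdots u_{n+1}^{\beta_{n+1}}$ for some $\beta_i\in\Z$, i.e.\ $\alpha=\sum_i\beta_iP_i\in L'$. Therefore $L=L'$ and the two rings coincide. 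The one substantive point is precisely this last step: one must know that an invariant \emph{monomial} is a genuine \emph{monomial} in the $u_i$ rather than merely a rational function in them — which is exactly the content of Lemma \ref{lem:mono} (and ultimately of $\C[u_1,\dots,u_{n+1}]$ being a UFD); everything else is formal manipulation of the monomial grading.
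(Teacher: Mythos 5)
Your proof is correct and follows essentially the same route as the paper's: decompose the localized ring into Laurent monomials, observe that the diagonal action forces the invariant subring to be spanned by invariant monomials, and invoke Lemma \ref{lem:mono} to express each such monomial in the $u_i$. You merely make explicit (via the character/lattice bookkeeping) the steps the paper leaves implicit, including the easy reverse inclusion.
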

\begin{proof}
	Every element in the invariant ring  $\left(\C[x_1,\dots,x_{n+1}]_{x_1\cdots x_{n+1}}\right)^{\tilde{G}}$ is a finite sum of rational monomials in variables $x_i$ with nonzero coefficients. Thus the lemma follows from Lemma \ref{lem:mono}.
\end{proof}
\subsection*{Noether--Cremona method} Our goal is to find hypersurfaces in $\P^{n+1}$ birational to $X/G$ using Noether--Cremona transformations. For this purpose, we proceed as follows.
\begin{enumerate}
	\item [1.] We choose a set of rational monomials $$u_i=x_1^{a_{i1}}\cdots x_{n+1}^{a_{i(n+1)}} \;\;(1\le i\le n+1)$$
	 such that $\C(x_1,\dots,x_{n+1})^{\tilde{G}}=\C(u_1,\dots,u_{n+1})$. Such choice of $u_i$ corresponds to a birational map $\Phi_U: U/G\dashrightarrow V$ given by $\Phi_U^*(u_i)=x_1^{a_{i1}}\cdots x_{n+1}^{a_{i(n+1)}}$, where local charts
	$$U=\{x_{n+2}=1\}= \P_{(x_1:\cdots:x_{n+2})}^{n+1}\setminus H {\;\;\rm and \;\;} V=\{u_{n+2}=1\}\subset \P_{(u_1:\cdots:u_{n+2})}^{n+1}.$$
	 This naturally gives a Noether--Cremona transformation 
	 \begin{equation}\label{eq:Phi}
	 	\Phi: \P_{(x_1:\cdots:x_{n+2})}^{n+1}/G \dashrightarrow \P_{(u_1:\cdots:u_{n+2})}^{n+1}
	 \end{equation}
	 such that $\Phi|_{U/G}=\Phi_U$.
	\item [2.] Let $f(x_1,\dots,x_{n+1}):=F(x_1,\dots,x_{n+1},1)$. Since $f\in \C[x_1,\dots,x_{n+1}]^{\tilde{G}}$, by Lemma \ref{lem:local}, there exist $p\in \C[u_1,\dots,u_{n+1}]$ and a monomial $q=u_1^{a_1'}\cdots u_{n+1}^{a_{n+1}'}$ (all $a_i'\ge 0$) such that
	$$f=\frac{p(u_1,\dots,u_{n+1})}{q(u_1,\dots,u_{n+1})}\, \text{ and }\, \gcd(p,q)=1.$$
	
	\item [3.] Replacing $u_i$ by $x_i/x_{n+2}$, we get a rational function $$f_1:=p(x_1/x_{n+2},\dots,x_{n+1}/x_{n+2})\in \C(x_1,\dots,x_{n+2}).$$ Then there exist $p_1\in \C[x_1,\dots,x_{n+2}]$ and a monomial $q_1=x_{n+2}^{d'}$ such that
	$$f_1=\frac{p_1}{q_1}\, \text{ and }\, \gcd(p_1,q_1)=1.$$
\end{enumerate}

	Let $F_{NC}:=p_1$, $X_{NC}:=\{F_{NC}=0\}\subset \P^{n+1}$ and $d_{NC}:=d'$. Note that $\deg(p_1)=\deg(q_1)=d_{NC}$. We call $F_{NC}$ (resp. $X_{NC}$) a {\it Noether--Cremona polynomial} (resp. {\it Noether--Cremona hypersurface}) of the pair $(F, G)$ (resp. the quotient $X/G$) of degree $d_{NC}$.
\begin{theorem}\label{thm:bir}
Every Noether--Cremona hypersurface $X_{NC}$ of $X/G$ is birational to $X/G$.
\end{theorem}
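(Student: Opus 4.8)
The plan is to prove that the Noether--Cremona transformation $\Phi$ of \eqref{eq:Phi} restricts to a birational map $X/G\dashrightarrow X_{NC}$. Since $X/G$ is irreducible ($X$ being so), it suffices to exhibit a dense open subset of $X/G$ and a dense open subset of $X_{NC}$ on which $\Phi$ induces an isomorphism. The right place to look is the ``big torus'': the monomial change of coordinates $\Phi_U^{*}(u_i)=x_1^{a_{i1}}\cdots x_{n+1}^{a_{i(n+1)}}$ becomes invertible exactly on the locus where $x_1\cdots x_{n+1}\neq 0$, so this is where $\Phi$ will be an honest isomorphism.

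First I would set up the torus isomorphism. Put $T_x=\{x_1\cdots x_{n+1}\neq 0\}\cap U=\mathrm{Spec}\,\C[x_1^{\pm 1},\dots,x_{n+1}^{\pm 1}]$ and $T_u=\{u_1\cdots u_{n+1}\neq 0\}\cap V=\mathrm{Spec}\,\C[u_1^{\pm 1},\dots,u_{n+1}^{\pm 1}]$. As $T_x$ is $G$-stable, $T_x/G=\mathrm{Spec}\big(\C[x_1^{\pm 1},\dots,x_{n+1}^{\pm 1}]\big)^{\tilde G}$, and Lemma \ref{lem:local} says this invariant ring is exactly $\C[u_1^{\pm 1},\dots,u_{n+1}^{\pm 1}]$; hence $\Phi$, which restricts to $\Phi_U$ on $U/G$, restricts to an isomorphism $\Theta\colon T_x/G\xrightarrow{\ \sim\ }T_u$. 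Next I would check that the two torus loci are dense in the relevant varieties. Because $X$ is an irreducible hypersurface of degree $\geq 2$, it lies in no hyperplane of $\P^{n+1}$; in particular $X\not\subseteq H$ and $X\not\subseteq\{x_j=0\}$ for $1\leq j\leq n+1$, so $X^{\circ}:=X\cap T_x$ is a dense open ($G$-stable) subset of $X$. Since the quotient morphism $\pi\colon X\to X/G$ is finite, hence closed, and $X^{\circ}$ is $G$-stable, $\pi(X^{\circ})=X^{\circ}/G$ is a dense open, irreducible subset of $X/G$. On the target side, $\gcd(p_1,q_1)=1$ forces $x_{n+2}\nmid p_1$, so $X_{NC}\cap U=\{p=0\}$ is dense in $X_{NC}$, and its further intersection with $T_u$ is dense in the irreducible component of $X_{NC}$ corresponding to the non-monomial part of $p$.

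The key step is to match the defining equations on $T_x/G$ and $T_u$. On $T_x$ the hypersurface $X^{\circ}$ is cut out by $f=F(x_1,\dots,x_{n+1},1)$, which is $\tilde G$-invariant; averaging (the Reynolds operator) shows that the ideal it defines contracts to $f\cdot\big(\C[x_1^{\pm 1},\dots,x_{n+1}^{\pm 1}]\big)^{\tilde G}$ in the invariant subring, and under the identification of the previous step this corresponds to the ideal $(f)=(p/q)=(p)$ of $\C[u_1^{\pm 1},\dots,u_{n+1}^{\pm 1}]$ --- the point being that $q$ is a monomial, hence a unit on $T_u$. Therefore $\Theta$ carries $X^{\circ}/G$ isomorphically onto $\{p=0\}\cap T_u=X_{NC}\cap T_u$. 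Combining with the density statements above, $\Phi$ restricts to an isomorphism between a dense open of $X/G$ and a dense open of $X_{NC}$ (more precisely, of its relevant irreducible component), which gives the claimed birationality.

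The hard part is exactly this last matching: one must carry the defining polynomial of $X$ through both the passage to the $\tilde G$-quotient and the monomial substitution and recognize the outcome as the defining polynomial of $X_{NC}$. This is where Lemma \ref{lem:local} (equality of the two localized invariant rings) together with the $\tilde G$-invariance of $f$ does the real work: invariance is what places $f$ inside $\C[u_1,\dots,u_{n+1}]_{u_1\cdots u_{n+1}}$ to begin with, as in Step~2 of the construction, and thereby produces $p$ and $q$. A minor but genuine nuisance is that $F_{NC}=p_1$ can acquire a monomial factor (for instance $p$ may be $u_1$ times an irreducible polynomial), so $X_{NC}$ need not be irreducible; one then notes that such a factor vanishes identically on $T_u$ and that $X/G$ is birational to the unique component of $X_{NC}$ which is the closure of $\Theta(X^{\circ}/G)$. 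Beyond this, only routine facts are needed: finite quotient morphisms are closed and take $G$-stable opens to opens, and $X/G$ is irreducible because $X$ is.
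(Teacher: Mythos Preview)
Your approach is essentially identical to the paper's: both restrict to the open tori $U_1=T_x$ and $V_1=T_u$, invoke Lemma~\ref{lem:local} to obtain the isomorphism $U_1/G\cong V_1$, and then match the defining equation $f$ with $p$ (using that the monomial $q$ is a unit on $T_u$) to conclude that $X_1/G\cong Y_1$. Your extra care about the possibility that $p$ acquires a monomial factor---so that $X_{NC}$ could be reducible and $Y_1$ dense only in one component---is a genuine refinement of the paper's assertion that $Y_1$ is dense in $X_{NC}$, but the core argument is the same.
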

\begin{proof}
Let $U_1:=U\cap\{x_1\cdots x_{n+1}\neq0\}$ and $V_1:=V\cap\{u_1\cdots u_{n+1}\neq0\}$. Then from the Noether--Cremona transformation $\Phi$ in \eqref{eq:Phi}, we have the following commutative diagram
\begin{center}
	\begin{tikzcd}
		U_1 \arrow[d, "\pi"'] \arrow[rd, "\varphi"] &     \\
		U_1/G \arrow[r, "\bar{\varphi}"]            & V_1\,,
	\end{tikzcd}
\end{center}
where $\pi$ is the quotient map and $\varphi=\Phi|_{U_1}$. Since $V_1={\rm Spec}\,\C[u_1,\dots,u_{n+1}]_{u_1\cdots u_{n+1}}$ and $U_1/G={\rm Spec}\,\left(\C[x_1,\dots,x_{n+1}]_{x_1\cdots x_{n+1}}\right)^{\tilde{G}}$, by Lemma \ref{lem:local}, $\bar{\varphi}$ is a biregular morphism.
Let $X_1:=X\cap U_1$ and $Y_1:=\{p=0\}\cap V_1$. Since $X$ is irreducible and of degree $d\geq 2$, $X_1$ is nonempty. The quotient $X_1/G$ is isomorphic to $Y_1$ under $\bar{\varphi}$. Since $X_1/G$ and $Y_1$ are dense open subsets of $X/G$ and $X_{NC}$ respectively, we conclude that $X_{NC}$ is birational to $X/G$.
\end{proof}
\begin{remark}
	If $f$ is a polynomial in variables $u_i$ (equivalently, $q=1$) of degree less than $d$, then the Noether--Cremona hypersurface $X_{NC}$ is of degree less than $d$. This sometimes can be achieved by suitable choice of local chart and rational monomials $u_i$ (see the proof of Lemma \ref{lem:HtoHtil}).
\end{remark}
\begin{remark}\label{rem:coeff}
	The numbers of the monomials with nonzero coefficients in $F$ and $F_{NC}$ are equal.
	Moreover, the sets of such coefficients of $F$ and $F_{NC}$ are the same.
\end{remark}
\begin{remark}\label{rmk:basefield}
	Under the conventions in this subsection, 
	let $e$ be the exponent of the finite abelian group $G$.
	If we replace the complex number field $\C$ by any field $K$ containing an $e$-th primitive root of unity, then the Noether--Cremona method works and Theorem \ref{thm:bir} holds. In fact, the condition for the field $K$ is needed in \cite{Fis15} and \cite{Cha69}.
\end{remark}
\subsection{The first example}\label{ss:Ex}
We adopt the notation in Subsection \ref{ss:Gabe}. Let 
\begin{equation}\label{eq:G}
 G=\left\langle [A_1]\right\rangle \subset\PGL(n+2,\C),
\end{equation} where $A_1=\Diag(\xi_3,\xi_3^2,1,\dots,1)$, $n\geq3$ and $\xi_3$ is a third primitive root of unity. Let $X\subset \P^{n+1}$ be an irreducible cubic hypersurface defined by 
\begin{equation}\label{eq:ex1F}
F=t_1x_1^3+t_2x_2^3+lx_1x_2+h(x_3,\dots,x_{n+2}),
\end{equation} 
where $l(x_3,\dots,x_{n+2})=\sum_{3\leq i\leq {n+2}}t_ix_i$, $t_1,t_2\in\C^*:=\C \,\setminus \{0\}$, $t_i\in\C$ ($i\geq 3$) and $h$ is homogeneous of degree 3 with $\gcd(h,x_{n+2})=1$. Recall that there is a decomposition $$\C[x_1,\dots,x_{n+2}]=\bigoplus_{i\geq0}\C[x_1,\dots,x_{n+2}]_i,$$
where $\C[x_1,\dots,x_{n+2}]_i$ are the spaces of homogenous polynomials of degree $i$.
Note that the space of cubic forms invariant by ${\tilde{G}}=\left\langle A_1\right\rangle $ is 
$$\C[x_1,\dots,x_{n+2}]^{\tilde{G}}_3={\rm span}_\C\{x_1^3,x_2^3,x_1x_2x_3,\dots,x_1 x_2 x_{n+2}\}\oplus\C[x_3,\dots,x_{n+2}]_3.$$
Then the conventions in Subsection \ref{ss:Gabe} are satisfied. Next, we follow the procedure of Noether--Cremona method.

Step 1. For any monomial $\mathfrak{m}=x_1^{\alpha_1}\cdots x_{n+1}^{\alpha_{n+1}}$, $\alpha_i\geq0$, we have $A_1(\mathfrak{m})=\xi_3^{\alpha_1+2\alpha_2}\cdot\mathfrak{m}$. Thus $\mathfrak{m}$ is $\tilde{G}$-invariant if and only if $\alpha_1+2\alpha_2\equiv0 \;({\rm mod} \;3)$. From this, we infer that 
the invariant ring $\C[x_1,\dots,x_{n+1}]^{\tilde{G}}=\C[x_1^3, x_2^3, x_1x_2, x_3, \dots, x_{n+1}]$.
 We choose rational monomials
 $$u_1=x_1x_2,\;u_2=\frac{x_2^2}{x_1},\; u_i=x_i, \; i\in\{3,\dots,n+1\}.$$
 Since $x_1^3=u_1^2/u_2$, $x_2^3=u_1u_2$, $x_1x_2=u_1$ and $x_i=u_i$ for $i\geq3$, we have
 $\C(x_1,\dots,x_{n+1})^{\tilde{G}}=\C(u_1,\dots,u_{n+1}).$
 
 Step 2.  We have
 $$	f=t_1x_1^3+t_2x_2^3+l(x_1,\dots,x_{n+1},1)x_1x_2+h(x_3,\dots,x_{n+1},1).$$
 By $$f=t_1\frac{u_1^2}{u_2}+t_2u_1u_2+l(u_3,\dots,u_{n+1},1)u_1+h(u_3,\dots,u_{n+1},1),$$
 we have
 $$p=t_1u_1^2+t_2u_1u_2^2+l(u_3,\dots,u_{n+1},1)u_1u_2+u_2h(u_3,\dots,u_{n+1},1)\; {\rm and }\; q=u_2.$$
 
  Step 3.  We have
  $$f_1=t_1(\frac{x_1}{x_{n+2}})^2+t_2\frac{x_1}{x_{n+2}}(\frac{x_2}{x_{n+2}})^2+l(\frac{x_3}{x_{n+2}},\dots,\frac{x_{n+1}}{x_{n+2}},1)\frac{x_1x_2}{x_{n+2}^2}+\frac{x_2}{x_{n+2}}h(\frac{x_3}{x_{n+2}},\dots,\frac{x_{n+1}}{x_{n+2}},1).$$
  Then 
  $$p_1=t_1x_1^2x_{n+2}^2+t_2x_1x_2^2x_{n+2}+l(x_3,\dots,x_{n+2})x_1x_2x_{n+2}+x_2h(x_3,\dots,x_{n+2})$$
  and $q_1=x_{n+2}^4.$
  Thus the Noether--Cremona hypersurface $X_{NC}\subset \P^{n+1}$ is a quartic hypersurface defined by $F_{NC}=p_1$.

In order to get a cubic hypersurface birational to $X/G$, we apply the Noether--Cremona method to the quotient of $X_{NC}$ by the trivial group. Let $X':=X_{NC}$, $F':=F_{NC}$, $G':=\langle [A_1'] \rangle$, $\tilde{G'}:=\langle A_1' \rangle$, $H':=\{x_2=0\}$, where $A_1'=I_{n+2}$ (the identity matrix of rank $n+2$). Clearly the conventions in Subsection \ref{ss:Gabe} hold for the quintuple $(G',\tilde{G'}, X',F',H')$.
\begin{lemma}\label{lem:HtoHtil}
		The cubic hypersurface $X'_{NC}\in\P^{n+1}$ defined by 
	\begin{equation}\label{eq:ex1Fpnc}
	F'_{NC}=t_1 x_1^2 x_2+	t_2 x_1 x_2^2 +lx_1x_2+h(x_3,\dots,x_{n+2})
  \end{equation}
	is a Noether--Cremona hypersurface of $X'/G'$.
\end{lemma}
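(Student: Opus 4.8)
The plan is to apply the three-step Noether--Cremona procedure directly to the pair $(F', G')$ with the new hyperplane $H'=\{x_2=0\}$, that is, with the local chart $U'=\{x_2=1\}$ rather than $\{x_{n+2}=1\}$. Since $G'$ is trivial, $\tilde{G'}$ acts trivially, so the invariant field is the full rational function field and we are free to choose the simplest possible rational monomials: the coordinates adapted to the chart $\{x_2=1\}$. First I would write $f':=F'(x_1,\dots,x_{n+1},1)$, the dehomogenization of $F'=F_{NC}$ in the variable $x_{n+2}$; here $F_{NC}=t_1x_1^2x_{n+2}^2+t_2x_1x_2^2x_{n+2}+l(x_3,\dots,x_{n+2})x_1x_2x_{n+2}+x_2h(x_3,\dots,x_{n+2})$, so $f'$ is a polynomial (denominator $q=1$). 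The key idea is that in this chart the monomial $u_2$ should be chosen as $1/x_2$ (or rather, we permute the roles so that the ``new projective coordinate'' becomes $x_2$ instead of $x_{n+2}$): concretely, set $u_i = x_i/x_2$ for $i \neq 2$ playing the role of affine coordinates on $U'$, corresponding to the Noether--Cremona transformation that rescales by $x_2$.

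The heart of the computation is Step 3: after substituting $u_i \mapsto x_i/x_2$ into $f'$ (viewed now as a function on the new chart) and clearing denominators, one must check that the denominator is exactly a power of $x_2$ and that the resulting numerator $p_1$ is the claimed cubic $F'_{NC}=t_1x_1^2x_2+t_2x_1x_2^2+lx_1x_2+h(x_3,\dots,x_{n+2})$. I would track the four terms of $F_{NC}$ separately: the term $t_1x_1^2x_{n+2}^2$ dehomogenizes and transforms to $t_1(x_1/x_2)^2$, which upon rehomogenizing with respect to $x_2$ contributes $t_1 x_1^2 x_2$ after multiplying through by the correct power $x_2^3$; the term $t_2x_1x_2^2x_{n+2}$ contributes $t_2 x_1 x_2^2$; the term $l\cdot x_1x_2x_{n+2}$ contributes $l x_1 x_2$; and the term $x_2 h(x_3,\dots,x_{n+2})$ contributes $h(x_3,\dots,x_{n+2})$. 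One checks $\gcd(p_1, x_2)=1$ using $\gcd(h,x_{n+2})=1$ (so $h$ is not divisible by $x_{n+2}$, hence after the substitution $h(x_3,\dots,x_{n+2})$ is not divisible by $x_2$), which is exactly why the coprimality hypothesis on $h$ was imposed in \eqref{eq:ex1F}. Then Theorem~\ref{thm:bir} applied to the quintuple $(G',\tilde{G'},X',F',H')$ yields that $X'_{NC}=\{F'_{NC}=0\}$ is birational to $X'/G'=X'=X_{NC}$, which is birational to $X/G$ by Theorem~\ref{thm:bir} again.

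The step I expect to require the most care is verifying that the chosen rational monomials $u_i$ genuinely generate the invariant field and that the bookkeeping of which variable is ``dehomogenized'' versus ``rehomogenized'' is consistent with the conventions of Subsection~\ref{ss:Gabe}: the subtlety is that we are changing the distinguished hyperplane from $\{x_{n+2}=0\}$ to $\{x_2=0\}$ between the two applications of the method, so one must make sure that $F'=F_{NC}$ really does satisfy all the conventions for the new quintuple (in particular that $F_{NC}$ is irreducible — which follows since it is birational to the irreducible $X/G$ — and that the new $H'=\{x_2=0\}$ is $G'$-preserved, which is automatic as $G'$ is trivial). The degree count $\deg p_1 = \deg q_1 = d_{NC}$ forces $d_{NC}=3$, so $X'_{NC}$ is indeed a cubic, matching the claim. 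Everything else is the routine substitution and clearing-of-denominators already modeled in Steps 1--3 of Subsection~\ref{ss:Ex}.
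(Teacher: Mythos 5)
Your high-level strategy---rerun the Noether--Cremona procedure on $(F',G')$ with the distinguished hyperplane moved to $H'=\{x_2=0\}$ and then invoke Theorem \ref{thm:bir}---is the same as the paper's, and the term-by-term outputs you list are the correct ones. But there is a genuine gap at the one step that carries the entire content of the lemma: the choice of rational monomials. You explicitly take ``the simplest possible rational monomials: the coordinates adapted to the chart,'' i.e.\ $u_i=x_i$ (equivalently the substitution $u_i\mapsto x_i/x_2$ applied to plain coordinates). With that choice the Noether--Cremona transformation is the identity map, and the procedure returns $F_{NC}$ itself: on the chart $\{x_2=1\}$ the first term of $F_{NC}$ restricts to $t_1x_1^2x_{n+2}^2$, which has degree $4$ in your variables and rehomogenizes back to $t_1x_1^2x_{n+2}^2$, not to $t_1x_1^2x_2$; the full output is the quartic $t_1x_1^2x_{n+2}^2+t_2x_1x_2^2x_{n+2}+lx_1x_2x_{n+2}+x_2h$, i.e.\ $F_{NC}$ again. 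Your assertion that this term ``transforms to $t_1(x_1/x_2)^2$'' is only true if the product $x_1x_{n+2}$ has been declared a single new variable---and that is precisely the paper's key move: it takes $u_1=x_1x_{n+2}$ and $u_i=x_i$ for $i\in\{3,\dots,n+2\}$, which still generate the full function field of the chart (one recovers $x_1=u_1/u_{n+2}$), and under which
\begin{equation*}
F_{NC}|_{x_2=1}=t_1u_1^2+t_2u_1+l(u_3,\dots,u_{n+2})u_1+h(u_3,\dots,u_{n+2})
\end{equation*}
has degree $3$, so that homogenizing with respect to $x_2$ yields exactly $F'_{NC}$. Without this nontrivial monomial the degree does not drop from $4$ to $3$, and the lemma is not proved.

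Two smaller inconsistencies: you first dehomogenize $F'$ in $x_{n+2}$ but then rehomogenize in $x_2$, whereas the procedure of Subsection \ref{ss:Gabe} dehomogenizes and rehomogenizes in the same distinguished variable (here $x_2$, after permuting it into the role of $x_{n+2}$); and the suggestion ``$u_2=1/x_2$'' does not fit the framework and introduces denominators rather than removing them. Also, $h(x_3,\dots,x_{n+2})$ involves no $x_2$, so its coprimality to $x_2$ is automatic and does not use the hypothesis $\gcd(h,x_{n+2})=1$.
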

\begin{proof}
We choose the affine chart $U' := \{x_2 = 1\}$ and take
	$u_1=x_1x_{n+2}$, $u_i=x_i$ for $i\in\{3,\dots,n+2\}$.
	Clearly  $\C(x_1,x_3,\dots,x_{n+2})^{\tilde{G'}}=\C(u_1,u_3,\dots,u_{n+2}).$ Then we have
	 \begin{align*}
	f&=t_1 x_1^2 x_{n+2}^2+	t_2 x_1 x_{n+2} +lx_1x_{n+2}+h(x_3,\dots,x_{n+2})\\
&=t_1u_1^2+t_2u_1+l(u_3,\dots,u_{n+2})u_1+h(u_3,\dots,u_{n+2}).
	\end{align*}
	Homogenizing $p=f$ by replacing $u_i$ by $x_i/x_{2}$, we obtain $F'_{NC}$.
\end{proof}
\begin{lemma}\label{lem:smtosm}
	The cubic form $F$ in \eqref{eq:ex1F} is smooth if and only if $F'_{NC}$ in \eqref{eq:ex1Fpnc} is smooth.
\end{lemma}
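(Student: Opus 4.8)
The plan is to compare the singular loci of $X=\{F=0\}$ and $X'_{NC}=\{F'_{NC}=0\}$ directly through the Jacobian criterion, splitting $\P^{n+1}$ into the open set $\{x_1x_2\neq0\}$ and the union of hyperplanes $\{x_1=0\}\cup\{x_2=0\}$.

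First I would handle the open locus. Denoting by $y_1,\dots,y_{n+2}$ the coordinates on the target, consider the rational map
\[
\Psi\colon\P^{n+1}\dashrightarrow\P^{n+1},\qquad(x_1:\cdots:x_{n+2})\longmapsto\bigl(x_1^3:x_2^3:x_1x_2x_3:\cdots:x_1x_2x_{n+2}\bigr).
\]
Substituting $y_1=x_1^3$, $y_2=x_2^3$, $y_i=x_1x_2x_i$ into $F'_{NC}$ and using that $h$ is homogeneous of degree $3$, one obtains the identity $\Psi^{*}(F'_{NC})=x_1^3x_2^3\cdot F$. On $\{x_1x_2\neq0\}$ the map $\Psi$ is finite surjective onto $\{y_1y_2\neq0\}$; in the affine chart $\{x_1=1\}$ it reads $(x_2,x_3,\dots,x_{n+2})\mapsto(x_2^3,x_2x_3,\dots,x_2x_{n+2})$, with Jacobian determinant $3x_2^{n+2}\neq0$ on $\{x_2\neq0\}$, so (together with the symmetric computation in the chart $\{x_2=1\}$) the map $\Psi$ is étale on $\{x_1x_2\neq0\}$. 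Since $x_1^3x_2^3$ is a unit there, the above identity shows that $\Psi$ restricts to a finite surjective étale morphism $X\cap\{x_1x_2\neq0\}\to X'_{NC}\cap\{y_1y_2\neq0\}$; as smoothness is both preserved and reflected by surjective étale morphisms, $X$ is smooth along $\{x_1x_2\neq0\}$ if and only if $X'_{NC}$ is smooth along $\{y_1y_2\neq0\}$.

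Next I would treat $\{x_1=0\}\cup\{x_2=0\}$. A short computation with partial derivatives, using $t_1,t_2\in\C^{*}$, shows that each of $X$ and $X'_{NC}$ has a singular point on this union if and only if the cubic hypersurface $\{h=0\}\subset\P^{n-1}$ is singular. (For $X$: on $\{x_1=0\}$ the equation $\partial F/\partial x_2=3t_2x_2^{2}=0$ forces $x_2=0$, and then $\partial F/\partial x_i=\partial h/\partial x_i$ for $i\geq 3$ and $F=h$; the case of $X'_{NC}$, and of $\{x_2=0\}$, is similar.) Combining the two parts: $X$ is smooth $\iff$ $X$ is smooth along $\{x_1x_2\neq0\}$ and $\{h=0\}$ is smooth $\iff$ $X'_{NC}$ is smooth along $\{y_1y_2\neq0\}$ and $\{h=0\}$ is smooth $\iff$ $X'_{NC}$ is smooth.

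The only substantive ingredients are the identity $\Psi^{*}(F'_{NC})=x_1^3x_2^3F$ and the étaleness of $\Psi$ off $\{x_1x_2=0\}$; the rest is bookkeeping. The delicate point is precisely that $\Psi$ degenerates on $\{x_1x_2=0\}$ --- it contracts each hyperplane $\{x_i=0\}$ to a point --- so smoothness of $X$ and of $X'_{NC}$ along those two hyperplanes must be checked separately, and one has to verify that this hand computation returns the \emph{same} criterion (smoothness of $\{h=0\}$) for both cubic forms. One could dispense with the étale language altogether by writing out the equations of the singular loci inside $\{x_1x_2\neq0\}$ for $F$ and for $F'_{NC}$ explicitly and matching their solutions via the (generically $3$-to-$1$) correspondence induced by $\Psi$; this is elementary but computationally heavier.
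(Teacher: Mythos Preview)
Your argument is correct and takes a somewhat different route from the paper's. The paper proceeds by exhibiting, for each singular point of one hypersurface, an explicit singular point of the other: if $P=(x_1:\cdots:x_{n+2})$ is singular on $X$ with $x_1x_2\neq0$ then $(x_1^2/x_2:x_2^2/x_1:x_3:\cdots:x_{n+2})$ is singular on $X'_{NC}$, with an explicit formula involving $(t_1/t_2)^{1/3}$ for the converse, and on the boundary both reduce to $(0:0:x_3:\cdots:x_{n+2})$. Note that $(x_1^2/x_2:x_2^2/x_1:x_3:\cdots)=(x_1^3:x_2^3:x_1x_2x_3:\cdots)$ projectively, so the paper's correspondence is exactly your $\Psi$; the difference is that the paper verifies the transfer of singularities by a direct computation with the partial derivatives, whereas you invoke the structural fact that smoothness is preserved and reflected under surjective \'etale maps, at the cost of handling the contracted locus $\{x_1x_2=0\}$ through the auxiliary criterion ``$\{h=0\}$ smooth''. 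Two small imprecisions worth tightening: first, only $\{x_1=0\}$ and $\{x_2=0\}$ are contracted by $\Psi$, not every coordinate hyperplane; second, the boundary case for $X'_{NC}$ is not literally ``similar'' to that for $X$ --- on $\{x_1=0\}$ one has $\partial F'_{NC}/\partial x_2\equiv0$ and $\partial F'_{NC}/\partial x_1=x_2(t_2x_2+l)$, giving the extra root $x_2=-l/t_2$ --- though, as you assert, this still reduces to the same condition on $h$.
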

\begin{proof}
Suppose $X$ has a singular point $P = (x_1 : x_2 : \dots : x_{n+2})$. If $x_1 = 0$ or $x_2 = 0$, then it is clear that $(0 : 0 : x_3 : \dots : x_{n+2})$ is a singular point of $X'_{NC}$. If $x_1 x_2 \ne 0$, one can check that the point $\left(\frac{x_1^2}{x_2} : \frac{x_2^2}{x_1} : x_3 : \dots : x_{n+2}\right)$ is singular on $X'_{NC}$. Conversely, if $P = (x_1 : x_2 : \dots : x_{n+2})$ is a singular point of $X'_{NC}$, then $X$ is singular at $(0 : 0 : x_3 : \dots : x_{n+2})$ if $x_1 x_2 = 0$, and at $\left((\frac{t_1}{t_2})^{1/3} x_1 : (\frac{t_2}{t_1})^{1/3} x_2 : x_3 : \dots : x_{n+2}\right)$ if $x_1 x_2 \ne 0$.
\end{proof}
To summarize, by Theorem \ref{thm:bir}, Lemmas \ref{lem:HtoHtil}, \ref{lem:smtosm} and \cite[Theorem 13.12]{CG72}, we have the following
\begin{theorem}\label{thm:Ex1}
Let $X$ be a smooth cubic hypersurface in $\mathbb{P}^{n+1}$ defined by $F$ as in \eqref{eq:ex1F}. Let $G$ be as in \eqref{eq:G}. Then the quotient variety $X/G$ is birational to the smooth cubic hypersurface in $\P^{n+1}$ defined by $F'_{NC}$ as in \eqref{eq:ex1Fpnc}. In particular, if $n=3$, then $X/G$ is irrational.
\end{theorem}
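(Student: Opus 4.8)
The plan is to apply the Noether--Cremona method twice and then invoke Clemens--Griffiths. Steps 1--3 above already produce, from the pair $(F,G)$, a Noether--Cremona polynomial $F_{NC}=p_1$ and the quartic Noether--Cremona hypersurface $X_{NC}=\{F_{NC}=0\}\subset\P^{n+1}$. Since $X$ is irreducible of degree $3\ge 2$ and is preserved by $G$, and \textbf{Assumption A} holds here by Fischer \cite{Fis15}, Theorem \ref{thm:bir} applies and shows that $X_{NC}$ is birational to $X/G$; in particular $X_{NC}$ is irreducible, being birational to the irreducible variety $X/G$.

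Next I would run the method a second time, on the quotient of $X':=X_{NC}$ by the trivial group $G'$. As already observed before Lemma \ref{lem:HtoHtil}, the quintuple $(G',\tilde{G'},X',F',H')$ satisfies the conventions of Subsection \ref{ss:Gabe}: $F'=F_{NC}$ is irreducible of degree $4\ge 2$, $\tilde{G'}$ is trivial (so every polynomial is invariant), and $H'=\{x_2=0\}$. By Lemma \ref{lem:HtoHtil}, the cubic hypersurface $X'_{NC}$ defined by $F'_{NC}$ in \eqref{eq:ex1Fpnc} is a Noether--Cremona hypersurface of $X'/G'=X'$, so Theorem \ref{thm:bir} gives that $X'_{NC}$ is birational to $X'=X_{NC}$. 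Chaining the two birational equivalences, $X/G$ is birational to $X'_{NC}$.

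It remains to transfer smoothness and to conclude. Since $X$ is smooth by hypothesis, Lemma \ref{lem:smtosm} shows that $F'_{NC}$ is smooth, so $X/G$ is birational to the smooth cubic hypersurface $\{F'_{NC}=0\}\subset\P^{n+1}$, which is the first assertion. For the last assertion take $n=3$: then $X'_{NC}$ is a smooth cubic threefold, hence irrational by Clemens--Griffiths \cite[Theorem 13.12]{CG72}, and since rationality is a birational invariant, $X/G$ is irrational as well.

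The argument is essentially bookkeeping: the two explicit Noether--Cremona computations (Steps 1--3 and the proof of Lemma \ref{lem:HtoHtil}) and the smoothness comparison (Lemma \ref{lem:smtosm}) have already been carried out, and what is left is to check that the hypotheses of Theorem \ref{thm:bir} --- chiefly the conventions of Subsection \ref{ss:Gabe}, i.e. irreducibility of the relevant form together with the coprimality normalizations in Steps 2--3 --- hold at both stages, and to invoke \textbf{Assumption A} (valid here by Fischer). The only mild subtlety is making sure that the intermediate quartic $X_{NC}$ is genuinely irreducible, so that the second application of the method is legitimate; but this is immediate from the first application of Theorem \ref{thm:bir}, since $X/G$ is irreducible.
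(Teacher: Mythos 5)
Your proposal is correct and follows essentially the same route as the paper: the theorem is proved there precisely by combining the two Noether--Cremona computations (Steps 1--3 and Lemma \ref{lem:HtoHtil}) via Theorem \ref{thm:bir}, transferring smoothness with Lemma \ref{lem:smtosm}, and citing \cite[Theorem 13.12]{CG72} for $n=3$. Your extra remark on the irreducibility of the intermediate quartic $X_{NC}$ is a sensible (if implicit in the paper) point of care, not a departure from the argument.
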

\subsection{The second example}\label{ss:Ex3}
Again, we adopt the notation in Subsection \ref{ss:Gabe}. 
\begin{theorem}\label{thm:Ex3}
Let $X$ be a smooth cubic hypersurface in $\mathbb{P}^{n+1}$ defined by 
\begin{equation*}
F=t_1x_1^2x_2+t_2x_2^2x_1+t_3x_3^2x_4+t_4x_4^2x_3+l_1x_1x_3+l_2x_1x_4+l_3x_2x_3+l_4x_2x_4+h(x_5,\dots,x_{n+2}),
\end{equation*} 
where $n\ge 3$, $l_i(x_5,\dots,x_{n+2})=\sum_{5\leq j\leq {n+2}}t_{ij}x_j$, $t_1,t_2,t_3,t_4\in\C^*$, $t_{ij}\in\C$ and $h$ is homogeneous of degree $3$ with $\gcd(h,x_{n+2})=1$. Let $G=\left\langle [A_1]\right\rangle \subset\PGL(n+2,\C)$, where $A_1=\Diag(\xi_3,\xi_3,\xi_3^2,\xi_3^2,1,\dots,1)$. Then $X/G$ is rational.\end{theorem}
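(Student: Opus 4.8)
The plan is to run the Noether--Cremona procedure of Subsection~\ref{ss:Gabe} for a judiciously chosen set of rational monomials, observe that the resulting Noether--Cremona hypersurface $X_{NC}$ --- although of degree $5$ --- is linear in the variable $x_2$, and deduce that it is rational by projection; by Theorem~\ref{thm:bir} this gives the rationality of $X/G$.

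First I would record the invariant theory of $\tilde{G}=\langle A_1\rangle$: a monomial $x_1^{\alpha_1}\cdots x_{n+1}^{\alpha_{n+1}}$ is $\tilde{G}$-invariant if and only if $\alpha_1+\alpha_2+2\alpha_3+2\alpha_4\equiv 0\pmod 3$, so the exponent lattice of invariant rational monomials is the index-$3$ sublattice $L\subset\Z^{n+1}$ cut out by this congruence; in particular $F\in\C[x_1,\dots,x_{n+2}]^{\tilde{G}}$ and the conventions of Subsection~\ref{ss:Gabe} hold. Then I would take
\[
u_1=x_1^2x_2,\qquad u_2=x_1x_2^2,\qquad u_3=x_1x_3,\qquad u_4=x_1x_4,\qquad u_i=x_i\ \ (5\le i\le n+1),
\]
and check that the exponent vectors of $u_1,\dots,u_{n+1}$ form a $\Z$-basis of $L$; the only computation is that the $4\times 4$ matrix formed from the $x_1,x_2,x_3,x_4$--exponents of $u_1,u_2,u_3,u_4$ is block lower-triangular with diagonal blocks $\left(\begin{smallmatrix}2&1\\1&2\end{smallmatrix}\right)$ and $I_2$, hence has determinant $3$. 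Thus $\C(x_1,\dots,x_{n+1})^{\tilde{G}}=\C(u_1,\dots,u_{n+1})$. The point of the choice is that $x_1^2x_2$ and $x_1x_2^2$ --- the terms in which $F$ is ``most nonlinear'' in $x_2$ --- become $u_1,u_2$, while $u_3,u_4$ absorb $x_3,x_4$.

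Next I would run Steps~2--3 of the method. Using $x_1^3=u_1^2/u_2$ and $x_2/x_1=u_2/u_1$ one rewrites $f:=F(x_1,\dots,x_{n+1},1)$ as a Laurent polynomial in the $u_i$ whose only denominators are powers of $u_1$; clearing them gives $q=u_1^2$ and
\begin{multline*}
p=t_1u_1^3+t_2u_1^2u_2+t_3u_2u_3^2u_4+t_4u_2u_3u_4^2\\
{}+\tilde l_1u_1^2u_3+\tilde l_2u_1^2u_4+\tilde l_3u_1u_2u_3+\tilde l_4u_1u_2u_4+\tilde h\,u_1^2,
\end{multline*}
where $\tilde l_i:=l_i(u_5,\dots,u_{n+1},1)$ and $\tilde h:=h(u_5,\dots,u_{n+1},1)$; one has $\gcd(p,q)=1$ because $t_3,t_4\ne 0$ forces $u_1\nmid p$. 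Homogenising $p$ in $u_{n+2}$ and putting $u_i=x_i$ produces the degree-$5$ Noether--Cremona hypersurface $X_{NC}=\{F_{NC}=0\}$ with
\begin{multline*}
F_{NC}=t_1x_1^3x_{n+2}^2+t_2x_1^2x_2x_{n+2}^2+t_3x_2x_3^2x_4x_{n+2}+t_4x_2x_3x_4^2x_{n+2}\\
{}+l_1x_1^2x_3x_{n+2}+l_2x_1^2x_4x_{n+2}+l_3x_1x_2x_3x_{n+2}+l_4x_1x_2x_4x_{n+2}+hx_1^2 ,
\end{multline*}
where $l_i=l_i(x_5,\dots,x_{n+2})$ and $h=h(x_5,\dots,x_{n+2})$ are as in the statement; note $x_{n+2}\nmid F_{NC}$ because $\gcd(h,x_{n+2})=1$, as the procedure requires.

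Finally I would observe that every monomial of $F_{NC}$ has degree $\le 1$ in $x_2$, so $F_{NC}=x_2A+B$ with $A,B\in\C[x_1,x_3,\dots,x_{n+2}]$ and $A\ne 0$ (it contains $t_2x_1^2x_{n+2}^2$ with $t_2\in\C^*$). Projection of $\P^{n+1}$ away from $[0:1:0:\cdots:0]$ then restricts to a birational map $X_{NC}\dashrightarrow\P^n$, with inverse $[x']\mapsto[A(x')x_1:-B(x'):A(x')x_3:\cdots:A(x')x_{n+2}]$; hence $X_{NC}$ is rational, and by Theorem~\ref{thm:bir} so is $X/G$. (If $F_{NC}$ happens to be reducible, one still concludes: $\gcd(A,B)$ is $x_2$-free, so $F_{NC}$ has a unique irreducible factor linear in $x_2$, and this factor is the unique component of $X_{NC}$ meeting $\{x_1\cdots x_{n+2}\ne 0\}$, hence --- by the proof of Theorem~\ref{thm:bir} --- the one birational to $X/G$, and the projection argument applies to it.) The part I expect to require the most care is the bookkeeping in Steps~2--3: verifying that the chosen $u_i$ genuinely generate the invariant field, and that after homogenisation the quadratic-in-$x_2$ term $t_2x_1x_2^2$ together with the cubic $h$ yield only terms of $x_2$-degree $\le 1$, so that $F_{NC}$ really is linear in $x_2$.
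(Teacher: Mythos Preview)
Your proof is correct and follows the same strategy as the paper: apply the Noether--Cremona method to obtain a quintic $X_{NC}$ that is linear in $x_2$, then project. The paper chooses $u_1=x_2x_3,\ u_2=x_2x_4,\ u_3=x_1x_3,\ u_4=x_3^2x_4$ (and $u_i=x_i$ for $i\ge5$), whereas you take $u_1=x_1^2x_2,\ u_2=x_1x_2^2,\ u_3=x_1x_3,\ u_4=x_1x_4$; both choices lead to a degree-$5$ Noether--Cremona polynomial with $x_2$-degree~$1$. Your lattice-index verification that the chosen $u_i$ generate $\C(x_1,\dots,x_{n+1})^{\tilde G}$ is cleaner than the paper's explicit listing of ring generators, and your Step~2--3 bookkeeping is accurate.

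One minor point: your parenthetical treatment of a hypothetically reducible $F_{NC}$ contains a slip --- a nonconstant $x_2$-free factor $\gcd(A,B)$ need not avoid the torus, so it is not automatically ``the unique component of $X_{NC}$ meeting $\{x_1\cdots x_{n+2}\ne0\}$''. Fortunately this contingency never arises: since no coordinate $x_i$ divides your $F_{NC}$ (by $t_1,t_3\in\C^*$ and $\gcd(h,x_{n+2})=1$), every irreducible component of $X_{NC}$ meets $V_1$, and the irreducibility of $Y_1\cong X_1/G$ (coming from smoothness of $X$) then forces $F_{NC}$ itself to be irreducible. You can simply drop the parenthetical.
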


\begin{proof}
Similar to the proof of Theorem \ref{thm:Ex1}, we apply Noether--Cremona method for $X/G$. We take the affine chart $U=\{x_{n+2}=1\}$ and choose $\tilde{G}$-invariant monomials
$$u_1=x_2 x_3, \,u_2=x_2 x_4,\, u_3=x_1 x_3, \, u_4=x_3^2 x_4 \;{\rm and} \;u_i=x_i, \; i\in\{5,\dots,n+1\}.$$
By computing monomials in the spaces $\C[x_1,\dots,x_{n+1}]_i^{\tilde{G}}$ ($1\le i\le 3$), we infer that $$\C[x_1,\dots,x_{n+1}]^{\tilde{G}}=\C[x_1 x_3, x_1 x_4,x_2 x_3, x_2 x_4,x_1^3, x_1^2 x_2, x_1 x_2^2, x_2^3, x_3^3, 
x_3^2 x_4, x_3 x_4^2,x_4^3,x_5,\dots,x_{n+1}].$$ Since $x_1x_4=u_1^{-1}u_2u_3$, $x_1^3=u_1^{-1}u_2u_3^3u_4^{-1}$, $x_1^2 x_2=u_2u_3^2u_4^{-1}$, $x_1 x_2^2=u_1u_2u_3u_4^{-1}$, $x_2^3=u_1^2u_2u_4^{-1}$, $x_3^3=u_1u_2^{-1}u_4$, $x_3 x_4^2=u_1^{-1}u_2u_4$ and  $x_4^3=u_1^{-2}u_2^2u_4$,
we have
$\C(x_1,\dots,x_{n+1})^{\tilde{G}}=\C(u_1,\dots,u_{n+1})$. In Step $3$, we obtain \begin{align*}
	p_1=&t_1 x_1 x_2 x_3^2x_{n+2}
	+t_2 x_1^2 x_2 x_3x_{n+2}
	+t_3 x_1 x_4^2 x_{n+2}^2
	+t_4 x_2 x_4^2 x_{n+2}^2
	+l_1x_1x_3x_4x_{n+2}\\
	&+l_2x_2x_3x_4x_{n+2}+l_3x_1^2x_4x_{n+2}+l_4x_1x_2x_4x_{n+2}+x_1x_4h
\end{align*}
and $q_1=x_{n+2}^5.$ Since the degree of $x_2$ in $F_{NC}=p_1$ is $1$, we have that the Noether--Cremona hypersurface $X_{NC}$ is rational, which implies that $X/G$ is rational by Theorem \ref{thm:bir}.
\end{proof}

\begin{remark}
A {\it$\Q$-Fano threefold} is a normal projective threefold $Y$ with $\Q$-factorial terminal singularities, $-K_Y$ ample and Picard number $1$. We obtain the following two new $\Q$-Fano threefolds and their rationality can be determined by Noether--Cremona method.
	\begin{itemize}
		\item[(1)]	Let $X$ be the Fermat cubic threefold (i.e. $X = \{\sum_{i=1}^5 x_i^3 = 0\}$) and $G=\left<[A_1]\right>$ in $\PGL(5,\C)$, where $A_1=\Diag(\xi_3,\xi_3,\xi_3^2,\xi_3^2,1)$. Then the quotient variety $X/G$ turns out to be a $\mathbb{Q}$-Fano threefold with the basket of singularities $\mathcal{B} = \{6 \times \frac{1}{3}(1,2,2)\}$.  By Theorem \ref{thm:Ex3}, $X/G$ is rational. 
		\item[(2)]
		Let $X$ be the cubic threefold  defined by $F=x_1^2x_2+x_2^2x_3+x_3^2x_4+x_4^2x_1+x_5^3$ and $G=\left<[A_1]\right>$ in $\PGL(5,\C)$, where $A_1=\Diag(\xi_5,\xi_5^3,\xi_5^4,\xi_5^2,1)$ and $\xi_5$ is a $5$-th primitive root of unity. Then the quotient variety $X/G$ is $\Q$-Fano with the basket $\mathcal{B}=\left\{4\times \frac{1}{5}(1,2,4)\right\}$. Take rational monomials $u_1=x_2 x_4$, $u_2=x_2x_3^3$, $u_3=x_3^2 x_4$, $u_4=x_1 x_4^2$ and local chart $U=\{x_5=1\}$. Following the steps in Subsection \ref{ss:Gabe}, we have the Noether--Cremona hypersurface $X_{NC}$ of $X/G$ is defined by
		\begin{equation*}
			F_{NC}=x_2^2 x_4^2 x_5+x_1^2 x_2 x_3^2+x_1 x_3^4+x_1 x_3^3 x_4+x_1 x_3^3 x_5.
		\end{equation*}
		Since the degree of $x_5$ in $F_{NC}$ is 1, the quotient variety $X/G$ is rational by Theorem \ref{thm:bir}. 
	\end{itemize}
	These two examples match No. 40245 and  No. 40057 in the {\it Graded Ring Database} (\cite{BK}), respectively. 
	It seems that the rationality of $\mathbb{Q}$-Fano threefolds in these 2 classes was previously unknown and cannot be determined using the methods that are effective for other classes (e.g. \cite{Oka19}, \cite{Pro22}).
\end{remark}

\section{Proof of the main theorem}
In this section, we use the Noether--Cremona method to study  the quotients of smooth cubic threefolds $X$ in a $2$-dimensional family by an order $9$ abelian group $G$ and its order $3$ subgroup $G_1$. We show that $X/G$ are birational to smooth cubic threefolds $X'$ in another $2$-dimensional family (Theorem \ref{thm:C3C3}). Moreover, we prove that the quotients $X/{G_1}$ are rational (Theorem \ref{thm:C3cubic3}). Combining these results, we conclude that $X'$ admit unirational parameterizations of degree $3$ (Theorem \ref{thm:maindetailed}), which implies our main result (Theorem \ref{thm:main}).
\begin{theorem}\label{thm:C3C3}
	Let $X\subset \P^4$ be a smooth cubic threefold defined by
	\begin{equation*}
	F=t_1 x_1^3 + t_2 x_2^3 + t_3 x_3^3 + t_4 x_4^3 + t_5 x_5^3 + t_6 x_1 x_2 x_3 + t_7 x_2 x_4 x_5,
\end{equation*} 
where $t_1,\dots,t_5\in\C^*$, $t_6,t_7\in\C$. Let $G=\left\langle [A_1],[A_2]\right\rangle  \subset {\rm PGL}(5,\C)$, where
		$$A_1=\Diag(\xi_3,1, \xi_3^2, 1, 1) \text{ and } A_2=\Diag(1,\xi_3,\xi_3^2,\xi_3^2,1).$$ 
Then $X/G$ is birational to the smooth cubic threefold defined by the polynomial 
\begin{equation*}
t_1 x_1^2 x_3 + t_2 x_2^3 + t_3 x_1 x_3^2 +t_4 x_4^2 x_5 + t_5 x_4 x_5^2+ t_6 x_1 x_2 x_3 +  t_7 x_2 x_4 x_5.
\end{equation*} In particular, if $X$ is the Fermat cubic threefold, then $X/G$ is birational to $X$.
\end{theorem}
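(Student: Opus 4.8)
The plan is to apply the Noether--Cremona method of Subsection \ref{ss:Gabe} to the pair $(F,G)$, carrying out the three steps exactly as in the proofs of Theorems \ref{thm:Ex1} and \ref{thm:Ex3}, and then checking smoothness of the resulting cubic. First I would determine the invariant ring. Writing a monomial $\mathfrak{m}=x_1^{\alpha_1}\cdots x_4^{\alpha_4}$ (working in the chart $x_5=1$), one has $A_1(\mathfrak{m})=\xi_3^{\alpha_1+2\alpha_3}\mathfrak{m}$ and $A_2(\mathfrak{m})=\xi_3^{\alpha_2+2\alpha_3+2\alpha_4}\mathfrak{m}$, so $\mathfrak{m}$ is $\tilde G$-invariant iff $\alpha_1+2\alpha_3\equiv 0$ and $\alpha_2+2\alpha_3+2\alpha_4\equiv 0 \pmod 3$. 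From this I would read off a set of rational monomial generators of the invariant field; the natural choice adapted to the seven monomials appearing in $F$ is
\begin{equation*}
u_1 = x_1 x_3,\quad u_2 = x_2^3,\quad u_3 = \frac{x_3}{x_1},\quad u_4 = x_4 x_5^{-1}\cdot(\text{suitable power}),
\end{equation*}
but more precisely one wants $u_i$ so that $x_1^3 = u_1^2 u_3^{-1}$, $x_3^3 = u_1 u_3$, and the cross terms $x_1x_2x_3 = u_1\cdot(\text{monomial in }u_2)$, $x_2 x_4 x_5$ likewise become Laurent monomials in the $u_i$; since the target cubic in the statement has $x_1^2x_3,\,x_1x_3^2,\,x_4^2x_5,\,x_4x_5^2$ in place of $x_1^3,x_3^3,x_4^3,x_5^3$, the correct substitution must be the one realizing $x_1^3\mapsto x_1^2x_3$, $x_3^3\mapsto x_1x_3^2$ after homogenizing, which fixes the choice essentially uniquely (this is the same mechanism as $x_1^3=u_1^2/u_2$, $x_2^3=u_1u_2$ in Theorem \ref{thm:Ex1}, applied independently to the pairs $(x_1,x_3)$ and $(x_4,x_5)$).

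Second, with the $u_i$ fixed I would write $f=F(x_1,\dots,x_4,1)$ as $p/q$ with $p\in\C[u_1,\dots,u_4]$, $q$ a monomial, $\gcd(p,q)=1$ (Step 2), and then homogenize back via $u_i\mapsto x_i/x_5$ to obtain $F_{NC}=p_1$ and $q_1=x_5^{d_{NC}}$ (Step 3). I expect $q_1=x_5^2$ or so, and then $F_{NC}$ a quartic; as in the passage from Theorem \ref{thm:Ex1} to Lemma \ref{lem:HtoHtil}, a second application of the method with the trivial group and a well-chosen affine chart (here $U'=\{x_2=1\}$ looks right, since $x_2$ appears to the first power only in the cross terms $x_1x_2x_3$ and $x_2x_4x_5$) should bring the degree back down to $3$, yielding exactly the cubic $F' = t_1 x_1^2 x_3 + t_2 x_2^3 + t_3 x_1 x_3^2 +t_4 x_4^2 x_5 + t_5 x_4 x_5^2+ t_6 x_1 x_2 x_3 + t_7 x_2 x_4 x_5$ claimed. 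Theorem \ref{thm:bir} then gives that $X/G$ is birational to $X'=\{F'=0\}$, and Remark \ref{rem:coeff} explains why the coefficients $t_1,\dots,t_7$ are literally carried over.

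Third, I would prove the smoothness equivalence: $X=\{F=0\}$ is smooth iff $X'=\{F'=0\}$ is smooth. This is an explicit Jacobian-criterion computation in the spirit of Lemma \ref{lem:smtosm}: the birational map and its inverse are given by monomial (toric) formulas away from the coordinate hyperplanes, so a singular point with all coordinates nonzero on one side maps to a singular point on the other (with cube-root adjustments on the coefficients $t_i$ exactly as in Lemma \ref{lem:smtosm}), and singular points lying on the relevant coordinate hyperplanes are handled by direct inspection of the partial derivatives there. Finally, specializing $t_1=\cdots=t_5=1$, $t_6=t_7=0$ makes $F$ the Fermat cubic and $F'=x_1^2x_3+x_2^3+x_1x_3^2+x_4^2x_5+x_4x_5^2$, which is projectively equivalent to $\sum x_i^3$ (by the standard substitutions diagonalizing $x_1^2x_3+x_1x_3^2$ and $x_4^2x_5+x_4x_5^2$ over $\C$), giving the last assertion. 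The main obstacle is Step 1 together with the second-application chart choice: one must pick the rational monomials $u_i$ (and then the affine chart $U'$) so that the back-homogenization produces precisely the seven stated terms with no extra monomials and with the claimed exponent pattern — getting the bookkeeping of the Laurent exponents right for all seven monomials simultaneously is the delicate part, though it is entirely mechanical once the generators are correctly chosen. The smoothness lemma, while routine, also requires care at the coordinate hyperplanes.
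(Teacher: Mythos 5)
Your overall strategy is exactly the paper's (apply the Noether--Cremona method to $(F,G)$, reduce the degree by further applications with the trivial group, transfer smoothness, and finish the Fermat case by a linear change of coordinates), and your congruence conditions $\alpha_1+2\alpha_3\equiv 0$, $\alpha_2+2\alpha_3+2\alpha_4\equiv 0 \pmod 3$ are correct. However, the concrete content of the proof is precisely the choice of invariant rational monomials and the bookkeeping through the successive homogenizations, and there your proposal breaks down at the first step: the monomials you write down are not $\tilde G$-invariant. For $u_1=x_1x_3$ the $A_2$-weight is $0+2\cdot 1+2\cdot 0=2\not\equiv 0$, and for $u_3=x_3/x_1$ the $A_1$-weight is $-1+2=1\not\equiv 0$. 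The underlying heuristic --- run the mechanism of the first example ``independently on the pairs $(x_1,x_3)$ and $(x_4,x_5)$'' --- cannot be repaired, because $A_2$ couples $x_2$, $x_3$ and $x_4$: the sublattice of invariant Laurent monomials (index $9$ in $\Z^4$) admits no basis splitting along those pairs, so any valid generating set must mix variables across them. The paper's choice is $u_1=x_1x_3/x_4$, $u_2=x_2x_4$, $u_3=x_3^3$, $u_4=x_4^3$ in the chart $x_5=1$.

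As a consequence your degree predictions are also off: with the paper's (essentially forced) generators the first Noether--Cremona hypersurface is a \emph{quintic} ($q_1=x_5^5$), not a quartic, and \emph{two} further applications with the trivial group are needed (charts $\{x_1=1\}$ with $u_3=x_3x_5$, then $\{x_3=1\}$ with $u_1=x_1x_4$) to reach the stated cubic --- not one application in the chart $\{x_2=1\}$. Since you leave the generators ``fixed essentially uniquely'' without exhibiting them, and your explicit guesses fail the invariance test, the proof is not actually carried out; what remains correct in your write-up (the framework, the smoothness transfer in the spirit of Lemma \ref{lem:smtosm}, the diagonalization of $x_1^2x_3+x_1x_3^2$ for the Fermat case, and the appeal to Theorem \ref{thm:bir} and Remark \ref{rem:coeff}) is the scaffolding around the computation rather than the computation itself.
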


\begin{proof}
We first apply the Noether--Cremona method for $X/G$. Again, we adopt the notation in Subsection \ref{ss:Gabe}. We take the local chart $U=\{x_{5}=1\}$ and choose $\tilde{G}$-invariant rational monomials $$u_1=\frac{x_1 x_3}{x_4}, \,u_2=x_2 x_4,\, u_3=x_3^3 \;{\rm and} \;u_4=x_4^3.$$
By computing monomials in the spaces $\C[x_1,x_2,x_3,x_{4}]_i^{\tilde{G}}$ ($1\le i\le 5$), we conclude that
	$$\C[x_1,x_2,x_3,x_{4}]^{\tilde{G}}=\C[x_2 x_4, x_1 x_2 x_3, x_1^3, x_2^3, x_3^3, x_4^3,  x_1 x_3 x_4^2, x_1^2 x_3^2 x_4]$$
since every monomial in the subring $\bigoplus_{i \geq 6} \C[x_1, x_2,x_3, x_4]_i$ is divisible by at least one of the following monomials: $x_2 x_4$, $x_1 x_2 x_3$, $x_1^3$, $x_2^3$, $x_3^3$, $x_4^3$, $x_1 x_3 x_4^2$. Since
$x_1x_2x_3=u_1u_2$,  $x_1^3=u_1^3u_4u_3^{-1}$, 
$x_2^3=u_2^3u_4^{-1}$,
$x_1 x_3 x_4^2=u_1u_4$ and $x_1^2 x_3^2 x_4=u_1^2u_4,$
we have
$\C(x_1,x_2,x_3,x_4)^{\tilde{G}}=\C(u_1,u_2,u_3,u_4)$. In Step $3$, we obtain \begin{equation}\label{Ex2p1}
	p_1=t_1 x_1^3 x_4^2+t_2 x_2^3 x_3 x_5+t_3 x_3^2 x_4 x_5^2+t_4 x_3 x_4^2 x_5^2+t_5 x_3 x_4 x_5^3+t_6 x_1 x_2 x_3 x_4 x_5+t_7 x_2 x_3 x_4 x_5^2
\end{equation}
and $q_1=x_{5}^5.$
Then the Noether--Cremona hypersurface $X_{NC}\subset \P^{4}$ is a quintic threefold defined by $F_{NC}=p_1$.

Next we apply Noether--Cremona method for the quotient of $X'$ by the trivial group $G'$, where $X':=X_{NC}$, $G':=\{ [A_1']\}$ and $A_1'=I_5$. We choose the affine chart $U'=\{x_1=1\}$ and take
	$$u_2=x_2, u_3=x_3x_5, u_4=x_4, u_5=x_5.$$
	Clearly 
	$\C(x_2,x_3,x_4,x_5)=\C(u_2,u_3,u_4,u_5)$. From \eqref{Ex2p1}, we compute the Noether--Cremona hypersurface $X'_{NC}$ which is defined by \begin{equation}\label{eq:Ex2Fpnc}
		F'_{NC}=t_1 x_1^2 x_4^2 + t_2 x_2^3 x_3 + t_3 x_1 x_3^2 x_4 +t_4 x_3 x_4^2 x_5 + t_5 x_3 x_4 x_5^2+ t_6 x_1 x_2 x_3 x_4 +
		t_7 x_2 x_3 x_4 x_5.
	\end{equation}
	
Finally we apply Noether--Cremona method for the quotient of $X''$ by the trivial group $G'$, where $X'':=X_{NC}'$. We choose the affine chart $U''=\{x_3=1\}$ and take
$$u_1=x_1x_4, u_2=x_2, u_4=x_4, u_5=x_5.$$
From \eqref{eq:Ex2Fpnc}, we get the Noether--Cremona hypersurface $X_{NC}''$ defined by 
\begin{equation*}
		F''_{NC}=t_1 x_1^2 x_3 + t_2 x_2^3 + t_3 x_1 x_3^2 +t_4 x_4^2 x_5 + t_5 x_4 x_5^2+ t_6 x_1 x_2 x_3 +  t_7 x_2 x_4 x_5.
	\end{equation*}
By Theorem \ref{thm:bir}, we have $X/G$ is birational to $X_{NC}''$. The smoothness of $X$ is equivalent to the smoothness of $X_{NC}''$ by a similar argument in the proof of Lemma \ref{lem:smtosm}. The last statement of the theorem is obtained by a suitable linear change of coordinates (cf. \cite[Example 3.3]{YYZ24}).
\end{proof}

\begin{theorem}\label{thm:C3cubic3}
	Let $X\subset \P^4$ be a smooth cubic threefold preserved by $$G_1:=\langle [\Diag(1,\xi_3,\xi_3^2,\xi_3^2,1)]\rangle \subset {\rm PGL}(5,\C).$$ Then  $X/G_1$ is rational.
\end{theorem}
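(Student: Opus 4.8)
The plan is to apply the Noether--Cremona method to the pair $(F, G_1)$ for a \emph{general} smooth cubic threefold $X$ preserved by $G_1 = \langle [\Diag(1,\xi_3,\xi_3^2,\xi_3^2,1)]\rangle$, and then deduce the general case by a specialization-type argument. First I would write down the general $G_1$-invariant cubic form: since $A_1 = \Diag(1,\xi_3,\xi_3^2,\xi_3^2,1)$ acts on a monomial $x_1^{\alpha_1}\cdots x_5^{\alpha_5}$ by the character $\xi_3^{\alpha_2 + 2\alpha_3 + 2\alpha_4}$, the invariant cubic monomials are $x_1^3, x_5^3, x_1^2 x_5, x_1 x_5^2$ (and $x_1 x_5$ times nothing of degree $1$ invariant besides $x_1,x_5$ — so $x_1^2 x_5$, $x_1 x_5^2$), together with $x_2^3, x_3^3, x_4^3$, the mixed cubes $x_3^2 x_4, x_3 x_4^2$, the terms $x_2 x_3 x_i$ and $x_2 x_4 x_i$ for suitable $i$, and $x_1 x_2 x_j$, $x_5 x_2 x_j$-type terms only when the character vanishes. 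I would record the full invariant space $\C[x_1,\dots,x_5]_3^{\tilde G_1}$ explicitly and then pick a convenient subfamily or work with the generic invariant $F$.

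Next I would carry out the three-step procedure of Subsection~\ref{ss:Gabe}. Computing the invariant \emph{ring} $\C[x_1,\dots,x_4]^{\tilde G_1}$ (taking the chart $U = \{x_5 = 1\}$) and choosing rational monomials $u_1,\dots,u_4$ generating the invariant field: a natural choice is something like $u_2 = x_2^3$, $u_3 = x_3 x_4^{-1}$ or $u_3 = x_3^3$, $u_4 = x_4^3$, and a monomial involving $x_2 x_3$ or $x_2 x_4$ to capture the degree-one-in-$x_2$ cross terms — the precise choice should be made so that one of the resulting variables appears only to the first power in the homogenized Noether--Cremona polynomial $F_{NC} = p_1$. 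As in the proofs of Theorems~\ref{thm:Ex3} and~\ref{thm:C3C3}, once $F_{NC}$ is linear in one of the coordinate variables, the Noether--Cremona hypersurface $X_{NC}$ is rational (it is birational over the field generated by the other coordinates to an affine space), and hence $X/G_1$ is rational by Theorem~\ref{thm:bir}. One should also note that $\gcd(h, x_{n+2})=1$ is automatic here or can be arranged, so the conventions of Subsection~\ref{ss:Gabe} apply.

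The main obstacle I anticipate is twofold. First, the \emph{combinatorial bookkeeping}: the $G_1$-invariant cubic forms on $\P^4$ form a fairly large space (the $\xi_3^0$-eigenspace of the character $\alpha_2 + 2\alpha_3 + 2\alpha_4 \bmod 3$ on degree-$3$ monomials), so one must verify that \emph{for every} such $F$ with $X$ smooth, the chosen $u_i$ actually produce a $p_1$ that is genuinely linear (with invertible coefficient, i.e.\ nonvanishing somewhere) in the designated variable — not merely for special coefficients. This may require either a single clever choice of chart and monomials working uniformly, or splitting into a few cases according to which cross terms are present (mirroring how Theorem~\ref{thm:C3C3} handled the specific $t_6 x_1x_2x_3 + t_7 x_2 x_4 x_5$ shape). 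Second, passing from ``$X/G_1$ rational for general $G_1$-invariant $X$'' to ``$X/G_1$ rational for \emph{every} smooth $G_1$-invariant $X$'': here one cannot directly invoke specialization of rationality (which goes the wrong way in general), so instead I would argue that the Noether--Cremona construction is \emph{uniform in the coefficients} — by Remark~\ref{rem:coeff} the shape of $F_{NC}$ depends only on the monomial support of $F$, and the linear-in-one-variable conclusion is a statement about that support, hence holds for all $X$ in the family once it holds for one with the full support. This uniformity is precisely the feature that makes the Noether--Cremona method robust, and spelling it out carefully is the key technical point of the proof.
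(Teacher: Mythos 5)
Your proposal has the right method in mind but is missing the one idea that makes the paper's proof work, and the fallback you suggest for closing the gap provably cannot succeed. After rescaling the generator to $A_1'=\Diag(\xi_3,\xi_3,\xi_3^2,\xi_3^2,1)$ (so that the eigenspace structure is two $2$-dimensional blocks plus a line), the general invariant cubic is
$c(x_1,x_2)+d(x_3,x_4)+t_9x_5^3+x_5\cdot(\text{bilinear in }(x_1,x_2)\times(x_3,x_4))$
with $c,d$ arbitrary binary cubics; in particular the Fermat cubic lies in this family. For this full $13$-monomial support there is \emph{no} choice of rational monomials $u_1,\dots,u_4$ making the Noether--Cremona polynomial linear in one variable: the $u_j$-exponent of a monomial $x^a$ is a linear functional $\ell(a)$, and linearity in $u_j$ forces $\ell$ to take only two consecutive integer values on all $13$ exponent vectors; since $(0,0,0,0)$ (from $x_5^3$) and $(3,0,0,0),\dots,(0,0,0,3)$ all occur, each $3c_i\in\{-1,0,1\}$, hence $\ell\equiv 0$, contradicting invertibility of the substitution. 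So your hoped-for ``single clever choice working uniformly'' does not exist, and your uniformity argument via Remark \ref{rem:coeff} cannot repair this, because the obstruction is exactly that the relevant property depends on the monomial support, which varies over the family.

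The missing ingredient is a preliminary $G_1$-equivariant \emph{linear} (non-monomial) change of coordinates justified by smoothness. Smoothness of $X$ forces the binary cubics $c$ and $d$ to have three distinct roots (otherwise $X$ is singular along the coordinate lines $x_2=x_3=x_4=0$ or $x_1=x_2=x_5=0$, in the rescaled coordinates), so by a $\GL_2$-change inside each eigenspace one may write $c=t_1x_1^2x_2+t_2x_1x_2^2$ and $d=t_3x_3^2x_4+t_4x_3x_4^2$ with $t_i\neq0$; this is what the paper extracts from \cite[Proposition 3.3]{OY19}. After this normalization the form is exactly that of Theorem \ref{thm:Ex3}, whose proof is the monomial computation you sketch, and the result follows with no genericity or specialization issues. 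Two smaller points you also leave open: one must justify that the defining polynomial is literally $A_1'$-invariant rather than merely semi-invariant (the paper cites \cite[Lemma 4.8]{WY20}), and one should note at the outset that $[\Diag(1,\xi_3,\xi_3^2,\xi_3^2,1)]=[\Diag(\xi_3^2,1,\xi_3,\xi_3,\xi_3^2)]$, so the group is conjugate by a coordinate permutation to $\langle[A_1']\rangle$.
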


\begin{proof}
Let $A_1':=\Diag(\xi_3,\xi_3,\xi_3^2,\xi_3^2,1)$. Note that $[\Diag(1, \xi_3, \xi_3^2, \xi_3^2, 1)]=[\Diag(\xi_3^2, 1, \xi_3, \xi_3, \xi_3^2)]$. Thus it suffices to show that for any smooth cubic threefold $X'\subset \P^4$ preserved by $G_1':=\langle [A_1']\rangle \subset {\rm PGL}(5,\C)$, the quotient variety $X'/G_1'$ is rational. Let $F'$ be the defining equation of $X'$. By \cite[Lemma 4.8]{WY20}, we have $A_1'(F')=F'$. Note that the space $\C[x_1,\dots,x_5]^{\tilde{G_1}}_3$ of cubic forms invariant by ${\tilde{G_1}}=\left\langle A_1'\right\rangle $ is equal to
	$${\rm span}_\C\{x_1^3, x_1^2 x_2, x_1 x_2^2, x_1 x_3 x_5, x_1 x_4 x_5, x_2^3, x_2 x_3 x_5, x_2 x_4 x_5,
	x_3^3, x_3^2 x_4, x_3 x_4^2, x_4^3, x_5^3\}.$$
Since $X'$ is smooth, by \cite[Proposition 3.3]{OY19}, up to linear change of coordinates, we may assume $F'$ is of the form
	$$F'=t_1x_1^2 x_2+t_2x_1 x_2^2+t_3x_3^2 x_4+t_4x_3 x_4^2+t_5x_1 x_3 x_5+t_6x_2 x_3 x_5+t_7x_1 x_4 x_5+t_8x_2 x_4 x_5+t_9x_5^3,$$
	where $t_1,t_2,t_3,t_4,t_9\in\C^*$, $t_5,t_6,t_7,t_8\in \C$. Then the theorem follows from Theorem \ref{thm:Ex3}.
\end{proof}

\begin{theorem}\label{thm:maindetailed}
	Let $X'\subset \P^4$ be a smooth cubic threefold defined by
	\begin{equation}\label{eq:mainF}
		F'=t_1 x_1^2 x_3 + t_2 x_2^3 + t_3 x_1 x_3^2 +t_4 x_4^2 x_5 + t_5 x_4 x_5^2+ t_6 x_1 x_2 x_3 +  t_7 x_2 x_4 x_5,
	\end{equation} 
	where $t_1,\dots,t_5\in\C^*$, $t_6,t_7\in\C$. 
	Then $X'$ admits a unirational parametrization of degree $3$.
\end{theorem}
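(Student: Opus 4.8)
The plan is to exhibit $X'$ as a quotient and then assemble a degree-$3$ parametrization from the two Noether--Cremona results already established. First I would observe that the cubic $X'$ defined by \eqref{eq:mainF} is exactly the Noether--Cremona hypersurface $X_{NC}''$ appearing in Theorem \ref{thm:C3C3} (with $X'$ here playing the role of $X_{NC}''$ there, after relabelling the parameters $t_1,\dots,t_7$), and that Theorem \ref{thm:C3C3} shows $X'$ is birational to $X/G$ for the smooth cubic threefold $X = \{t_1x_1^3 + t_2x_2^3 + t_3x_3^3 + t_4x_4^3 + t_5x_5^3 + t_6x_1x_2x_3 + t_7x_2x_4x_5 = 0\}$ acted on by the order-$9$ group $G = \langle [A_1],[A_2]\rangle$. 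So there is a dominant rational map $X \dashrightarrow X/G \dashrightarrow X'$ of degree $|G| = 9$. This by itself gives a unirational parametrization of $X'$ of degree $9\cdot 2 = 18$ via the degree-$2$ parametrization of the cubic threefold $X$ from \cite[Appendix B]{CG72}, but the point is to cut the factor of $9$ down to $3$.

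The key idea is to factor the quotient $X \dashrightarrow X/G$ through an intermediate quotient by a subgroup of index $3$. I would set $G_1 := \langle [\Diag(1,\xi_3,\xi_3^2,\xi_3^2,1)]\rangle \subset G$, which has order $3$, and consider the tower $X \to X/G_1 \to X/G$. The second map $X/G_1 \dashrightarrow X/G$ has degree $|G/G_1| = 3$ (one should check $G_1$ is normal in $G$, which is automatic since $G$ is abelian, so $X/G_1$ carries a residual $G/G_1 \cong \Z/3$-action with quotient $X/G$). Now Theorem \ref{thm:C3cubic3} applies to $X$ and $G_1$: since $X$ is a smooth cubic threefold preserved by $G_1$, the quotient $X/G_1$ is rational. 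Therefore there is a birational (in particular dominant of degree $1$) map $\P^3 \dashrightarrow X/G_1$, and composing with the degree-$3$ map $X/G_1 \dashrightarrow X/G$ and the birational map $X/G \dashrightarrow X'$ of Theorem \ref{thm:C3C3} yields a dominant rational map $\P^3 \dashrightarrow X'$ of degree $1\cdot 3\cdot 1 = 3$, i.e. a unirational parametrization of $X'$ of degree $3$.

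The steps, in order, are: (i) identify $X'$ with $X_{NC}''$ from Theorem \ref{thm:C3C3} and record the birational equivalence $X/G \sim X'$; (ii) verify $G_1 \subset G$ is a subgroup of order $3$ and that $X$ (being $G$-invariant, hence $G_1$-invariant, and smooth) falls under the hypotheses of Theorem \ref{thm:C3cubic3}, giving rationality of $X/G_1$; (iii) note that $X/G_1 \dashrightarrow X/G$ has degree $[G:G_1] = 3$; (iv) compose $\P^3 \xrightarrow{\sim} X/G_1 \xrightarrow{\deg 3} X/G \xrightarrow{\sim} X'$ and read off that the total degree is $3$.

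The main obstacle is the bookkeeping of degrees of rational maps, which is only subtle where it interacts with birational modifications: one must be sure that precomposing or postcomposing a degree-$3$ dominant rational map with birational maps does not change the degree (it does not, since degree is a birational invariant of the generic fibre / a multiplicative invariant along towers of dominant rational maps of varieties of the same dimension), and that the quotient map $X/G_1 \to X/G$ genuinely has degree equal to the index $[G:G_1]$ rather than something smaller — this is where one uses that $G$ acts faithfully and generically freely on $X$ (equivalently on $\P^4$), so that the residual $G/G_1$-action on $X/G_1$ is generically free and the quotient map has the expected degree. Since $G$ consists of diagonal matrices with distinct character data on a smooth, hence non-degenerate, hypersurface $X$, generic freeness is routine to check on $X$. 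Everything else is a direct invocation of Theorems \ref{thm:C3C3} and \ref{thm:C3cubic3} together with \cite[Appendix B]{CG72} if one wishes to spell out the resulting parametrization of $X$ itself, though for the statement as given only the degree-$3$ map from $\P^3$ is needed.
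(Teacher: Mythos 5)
Your proposal is correct and follows essentially the same route as the paper: take $G_1=\langle[A_2]\rangle=\langle[\Diag(1,\xi_3,\xi_3^2,\xi_3^2,1)]\rangle$, use Theorem \ref{thm:C3cubic3} for rationality of $X/G_1$, factor $X/G\cong (X/G_1)/(G/G_1)$ through the degree-$3$ residual quotient, and identify $X/G$ with $X'$ via Theorem \ref{thm:C3C3}. Your additional verification that the residual $\Z/3$-action is faithful (so the quotient map really has degree $3$) is a detail the paper leaves implicit, and it checks out.
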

\begin{proof}
Let $X, F, G, A_1, A_2$ be as in Theorem \ref{thm:C3C3} and let $G_1:=\langle [A_2]\rangle$. Since $G_1$ is a (normal) subgroup of the abelian group $G$, we have $X/G$ is isomorphic to the quotient of $X/G_1$ by $G/G_1$. Then by Theorems \ref{thm:C3C3} and \ref{thm:C3cubic3}, we conclude the theorem. \end{proof}

\begin{remark}\label{rmk:2dim}
By smoothness of $X'$, we may assume $t_i=1$ ($1\le i\le 5$) by linear change of coordinates. Thus the cubics in Theorem \ref{thm:maindetailed} provide us a $2$-dimensional subvariety in the moduli space of the smooth cubic threefolds.
\end{remark}

Now we are ready to prove Theorems \ref{thm:main} and \ref{thm:main-cor}.
\begin{proof}[Proof of Theorem \ref{thm:main}]
By Theorem \ref{thm:maindetailed} and Remark \ref{rmk:2dim}, there exists a $2$-dimensional family of smooth cubic threefolds admitting a unirational parametrization of degree $3$. Then the theorem follows from \cite[Appendix B]{CG72}.
\end{proof}
\begin{proof}[Proof of Theorem \ref{thm:main-cor}]
If $t_6$ in \eqref{eq:mainF} is zero, then up to linear change of coordinates (cf. \cite[Example 3.3]{YYZ24}), the $F'$ in Theorem \ref{thm:maindetailed} can be written in the form $x_1^3+x_3^3+F_1'(x_2,x_4,x_5)$ by \cite[Lemma 3.9]{YYZ24}. 
	The theorem follows from \cite[Proposition 3.1]{CT17} and Theorem \ref{thm:maindetailed}.
\end{proof}
Note that the smooth cubic hypersurfaces in Theorem \ref{thm:main-cor} are universal ${\rm CH}_{0}$-trivial by Colliot-Th\'{e}l\`{e}ne \cite[Theorem 3.8]{CT17}.
	However, the coprime-degree unirational parametrizations  were previously unknown. 
	Moreover, we have the following observation.

\begin{corollary}
	Let $X$ be a smooth cubic hypersurface as in Theorems \ref{thm:main} and \ref{thm:main-cor}. Then for any integer $m\geq 1$, $X\times \P^m$ admits unirational parametrizations of coprime degrees.
\end{corollary}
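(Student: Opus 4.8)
The plan is to push the two coprime-degree unirational parametrizations of $X$ forward to $X\times\P^m$ without changing their degrees. Put $n:=\dim X$. By Theorems \ref{thm:main} and \ref{thm:main-cor}, $X$ admits unirational parametrizations of two coprime degrees $d_1,d_2$; fix dominant rational maps $\phi_i\colon\P^n\dashrightarrow X$ with $\deg\phi_i=d_i$ ($i=1,2$) and $\gcd(d_1,d_2)=1$.

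First I would form the product maps $\phi_i\times\id_{\P^m}\colon\P^n\times\P^m\dashrightarrow X\times\P^m$. Each is dominant, since its image contains the dense subset $\phi_i(\P^n)\times\P^m$. Moreover $\deg(\phi_i\times\id_{\P^m})=d_i$: a general fibre over a point $(x,y)\in X\times\P^m$ is $\phi_i^{-1}(x)\times\{y\}$, which has $d_i$ points for general $x$ (equivalently, at the level of function fields the extension $\C(\P^n\times\P^m)/(\phi_i\times\id_{\P^m})^{*}\C(X\times\P^m)$ is the base change of $\C(\P^n)/\phi_i^{*}\C(X)$ along the flat ring map obtained by adjoining the $m$ independent transcendentals of $\C(\P^m)$, and degree is preserved under such a base change).

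Next I would precompose with a birational map $\psi\colon\P^{n+m}\dashrightarrow\P^n\times\P^m$, which exists because a product of projective spaces is rational. Since degrees of dominant rational maps between varieties of the same dimension multiply under composition (tower of finite field extensions) and $\deg\psi=1$, the composite
\[
(\phi_i\times\id_{\P^m})\circ\psi\colon\P^{n+m}\dashrightarrow X\times\P^m
\]
is a dominant rational map of degree $d_i$. As $\dim(X\times\P^m)=n+m$, this is precisely a unirational parametrization of $X\times\P^m$ of degree $d_i$ in the sense of the paper. Taking $i=1,2$ yields unirational parametrizations of $X\times\P^m$ of coprime degrees $d_1$ and $d_2$, as desired. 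No step presents a genuine obstacle; the only point that needs a line of justification is the identity $\deg(\phi_i\times\id_{\P^m})=\deg\phi_i$, which is handled by the fibre count (or equivalently the flat base change) above, everything else being the multiplicativity of degree under composition and the rationality of $\P^n\times\P^m$.
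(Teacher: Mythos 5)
Your proof is correct and is exactly the standard argument the paper leaves implicit (the corollary is stated as an observation without proof): transport each coprime-degree parametrization $\phi_i$ of $X$ to $X\times\P^m$ via $\phi_i\times\id_{\P^m}$, which preserves the degree since the function-field extension is unchanged by adjoining the transcendentals of $\C(\P^m)$, and then precompose with a birational map $\P^{n+m}\dashrightarrow\P^n\times\P^m$. All steps are justified; nothing is missing.
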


	\begin{remark}
		By tracing the steps in our proofs, we have an explicit dominant rational map of degree $3$ from $\P^3$ to the Fermat cubic threefold given by
		\begin{footnotesize}
			$$(h_1 h_2 l_1 l_2^2 l_3^3 +\xi_3 h_1 h_3^3 l_1:
			3 h_1 h_2 h_3 l_1 l_2 l_3:
			-\xi_3 h_1 h_2 l_1 l_2^2 l_3^3 - h_1 h_3^3 l_1:
			h_1^3 h_3 l_3 + \xi_3 h_2 h_3 l_1^3 l_2^2 l_3:
			-\xi_3 h_1^3 h_3 l_3 - h_2 h_3 l_1^3 l_2^2 l_3),$$
		\end{footnotesize}
		where $l_1 = x_1 - \xi_ 3 x_3$, 
		$l_2 = x_2 + x_4$, 
		$l_3 = x_1 - \xi_ 3^2 x_3$, 
		$h_1 = x_1^2 x_3 +x_1x_3^2 + \xi_ 3^2 x_2^2 x_4  -\xi_3 x_2x_4^2$, 
		$h_2 = x_1^2x_2x_3+x_1x_2x_3^2+ x_2^2 x_4^2$, and
		$h_3 = x_1^2 x_3 +x_1x_3^2 + \xi_3x_2^2 x_4 -\xi_ 3^2  x_2x_4^2$.
	\end{remark}

	\begin{remark}
		Clearly, any variety birational to a smooth cubic threefold $X$ in Theorem \ref{thm:main} admits unirational parametrizations of coprime degrees 2 and 3;
		for instance, some smooth Fano threefolds: the blow-up of $X$ in a line, the blow-up of $X$ in a plane cubic curve and the Fano threefolds of degree $14$ and Picard number $1$ associated to $X$.
	\end{remark}
\begin{remark}\label{rmk:posi}
Theorem \ref{thm:main} and its proof are valid over any algebraically closed field $K$ of positive characteristic $\neq2,3$ (see \cite[Appendix B]{CG72}, Remark \ref{rmk:basefield} and proofs of Theorems \ref{thm:C3C3}, \ref{thm:C3cubic3}, \ref{thm:maindetailed}). Thus there is a $2$-dimensional family of smooth cubic threefolds over $K$ with unirational parametrizations of coprime degrees $2$ and $3$. These cubics are irrational (\cite{Mur73}).
\end{remark}

\end{document}